\newcommand{\Z}{\mathbb{Z}}
\newcommand{\Q}{\mathbb{Q}}
\newcommand{\R}{\mathbb{R}}
\newcommand{\C}{\mathbb{C}}
\newcommand{\T}{\mathbb{T}}
\newtheorem{defn}{Definition}[section]
\newtheorem{thm}[defn]{Theorem}
\newtheorem{prop}[defn]{Proposition}
\newtheorem{lemma}[defn]{Lemma}
\newtheorem{rem}[defn]{Remark}
\title{Some classifiable groupoid $C^{*}$-algebras with prescribed $K$-theory}
\author{
 Ian F. Putnam\thanks{Supported in part by a
grant from NSERC, Canada},\\
Department of Mathematics and Statistics,\\
University of Victoria,\\
Victoria, B.C., Canada V8W 3R4\\
email: ifputnam at uvic.ca}
\date{ }
\begin{document}
\maketitle

\begin{abstract}
Given a simple, acyclic dimension group $G_{0}$ 
and countable, torsion-free, abelian group $G_{1}$, we construct 
a minimal, amenable, \'{e}tale equivalence relation,
$R$, on a Cantor set whose associated groupoid 
$C^{*}$-algebra, $C^{*}(R)$,
is tracially AF, and hence 
classifiable in the Elliott classification scheme
for simple, amenable, separable $C^{*}$-algebras, 
and with $K_{*}(C^{*}(R)) \cong(G_{0}, G_{1})$.
\end{abstract}

\section{Introduction}
\label{intro}

Over the past twenty-five years, there has been an enormous amount
of truly
remarkable work devoted to classifying simple, amenable, 
separable  $C^{*}$-algebras by simple invariants
which can be roughly described as \newline K-theoretic.
This is the so-called Elliott program.

Let us discuss a specific important result in this program 
which is of interest to us here. In \cite{Lin1}, Huaxin Lin 
introduced the notion for a $C^{*}$-algebra to be tracially 
AF (or tracially approximately finite-dimensional).
We recall an equivalent form of the definition 
given by Dadarlat \cite{Dad}, in the case of a  unital 
$C^{*}$-algebra, $A$:
 for
any finite set $\mathcal{F}$ in $A$, $\epsilon > 0$ 
and non-zero projection $p_{0}$ in $A$, there 
is a finite-dimensional $C^{*}$-subalgebra
$F \subseteq  A$ with unit
$p$  satisfying
\begin{enumerate}
\item 
$\Vert pa - ap \Vert < \epsilon$, for all  $a$ in $\mathcal{F}$, 
\item 
$ pap \in_{\epsilon} F$, for all $a$ in $\mathcal{F}$, 
\item 
$1-p$ is unitarily equivalent to a subprojection of 
$p_{0}$.
\end{enumerate}

In \cite{Lin2}, Lin proved that two unital, separable, simple, amenable, 
tracially AF $C^{*}$ algebras satisfying the Universal
 Coefficient Theorem (see \cite{Bla})
are isomorphic if and only if
 their $K$-zero groups are isomorphic as ordered 
 abelian groups with order unit and their $K$-one groups are 
isomorphic.

One important aspect of the Elliott program 
is to understand the range of the invariant. In practical terms, this 
means specifying some $K$-theoretic data and constructing 
a $C^{*}$-algebra within the class having this data 
as its $K$-theory. The typical 
approach is to consider inductive limits of simpler
$C^{*}$-algebras. These are usually formed from 
continuous functions on compact spaces,
 finite-dimensional $C^{*}$-algebras, 
tensor products of these  and, 
 finally,  $C^{*}$-subalgebras of those.

This is quite natural in the sense that many of the approximation
techniques 
are very well-suited to dealing with these 
simpler algebras and also with inductive limits. 
On the other hand, many important $C^{*}$-algebras arise 
from geometric, topological, number theoretic
 or dynamical situations.  
In particular, the construction of $C^{*}$-algebras 
from groupoids provides a very general source for 
$C^{*}$-algebras \cite{Ren}. Moreover, this  has been the basis of
 many very fruitful interactions between these other fields 
 and operator algebras (\cite{GMPS2}, for example). The classification program 
 itself has devoted a lot of attention to the case of a crossed
 product of a discrete group acting on a commutative $C^{*}$-algebra
 (\cite{TW, Sz,ElZ}, as a few examples).
 In most of these cases, 
 inductive limit structures are not obviously available.

 In this paper, we address the following question: which 
 $C^{*}$-algebras that are classifiable in Elliott's sense, 
  may be constructed from 
 an \'{e}tale groupoid? Or, put in a better way, which 
  possible Elliott invariants may be realized as coming from
  the $C^{*}$-algebra of an \'{e}tale groupoid?
  If, in addition, 
   it is shown  that the $C^{*}$-algebra is 
  classifiable, then  the classification results 
  will provide isomorphisms with   $C^{*}$-algebras constructed by
  other means having the same invariant.

  The class of possible invariants we consider in our main 
  result is 
  rather restricted. On the other hand, the groupoids 
  which we produce are, in fact, equivalence relations 
  and the underlying space is a Cantor set. By a Cantor set, 
  we mean a compact, 
totally disconnected,  metrizable space with no isolated points.
 
 We recall the notion of an \'{e}tale equivalence relation.
 Let $X$ be a set and let $R$ be an equivalence relation on 
$X$. It is a groupoid
with the set of composable pairs $R^{2} =\{ ((x,y),(y,z)) \in R \times R \}$, 
product $(x,y) \cdot (y,z) = (x,z)$, for $((x,y),(y,z))$ in $R^{2}$ 
and inverse $(x,y)^{-1} = (y,x)$. For convenience, we will 
identify the diagonal in $R$ with $X$ in the usual way. This means that 
the range and source maps $r,s : R \rightarrow X$ are simply 
$r(x,y) = y,  s(x,y) = x$.

If $X,Y$ are topological spaces, a function 
$f: X \rightarrow Y$ is a local homeomorphism,
 if, for every $x$ in $X$, there is a neighbourhood $U$ of 
$x$ such that $f(U)$ is open and $f: U \rightarrow f(U)$ is 
a homeomorphism.

We say that a  topology
on an  equivalence relation $R$ on 
a topological space $X$ is \emph{\'{e}tale} 
 if the two maps $r,s$ are 
local homeomorphisms. We also say that 
$R$ is an \'{e}tale equivalence relation.
We  say that $R$ is minimal if every equivalence class is dense 
in $X$.

If $R$ is an \'{e}tale equivalence relation on a space $X$, the 
linear space of continuous complex functions of compact support 
on $R$ becomes a 
$*$-algebra with the operations
\begin{eqnarray*}
f \cdot g (x, y) &  =  & \sum_{(x,z) \in R} f(x,z) g(z,y), \\
f^{*}(x,y) & = & \overline{f(y,x)}
\end{eqnarray*}
for $f, g$ continuous and compact supported and $(x,y)$ in $R$.
The completion of this algebra in a suitable norm is 
a $C^{*}$-algebra which we denote by $C^{*}(R)$.

One particularly nice class of examples are the AF-equivalence relations. 
In this case, the associated $C^{*}$-algebra is an $AF$-algebra and 
its ordered $K$-zero group can be computed rather easily. It is a 
dimension group (see \cite{Eff} and \cite{Ren}).
 We say this ordered abelian group is simple if 
it has no order ideals. This is equivalent to the $C^{*}$-algebra
 being simple which, in turn, is equivalent to the 
 equivalence relation being minimal.
We describe this in detail in the next section.
 
There are a number of different equivalent conditions for a 
groupoid to be amenable. We will use 
 condition (ii) of Proposition 2.2.13 of \cite{ADR} which 
 follows.
There exists a sequence of functions $g_{l}, l \geq 1,$
on $R$ which are non-negative, continuous and compactly supported
and satisfy the following:
\begin{enumerate}
\item for every $x$ in $X$, 
\[
\sum_{(x,y) \in R } g_{l}(x,y) = 1,
\]
and 
\item the function
\[
(x,y) \rightarrow \sum_{(x,z) \in R}\vert g_{l}(x,z) - g_{l}(y,z) \vert,
\]
converges uniformly to zero on compact subsets of $R$.
\end{enumerate}
If $R$ is an amenable equivalence relation, then $C^{*}(R)$ is an amenable 
$C^{*}$-algebra \cite{ADR}. Moreover, in this case, $C^{*}(R)$ is simple if and only 
if $R$ is minimal \cite{Ren}.

  We state our main  result.

\begin{thm}
\label{intro:10}
Let $G_{0}$ be a simple, acyclic dimension group with order unit
 and let 
$G_{1}$ be a countable, torsion-free,  abelian group. There 
exists an \'{e}tale equivalence relation, $R$, on a Cantor set, $X$,
such that 
\begin{enumerate}
\item $R$ is minimal,
\item $R$ is amenable,
\item $K_{0}(C^{*}(R)) \cong G_{0}$, as ordered abelian groups with order unit,
\item $K_{1}(C^{*}(R)) \cong G_{1}$, as groups, and 
\item $C^{*}(R)$ is tracially AF.
\end{enumerate}
\end{thm}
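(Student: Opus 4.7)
The plan is to construct $R$ on a Cantor set $X$ as an increasing union $\bigcup_{n \geq 0} R_{n}$ of \'{e}tale equivalence relations, so that $C^{*}(R) = \overline{\bigcup_{n} C^{*}(R_{n})}$ acquires its $K$-theory by continuity. We take $R_{0}$ to be an AF-equivalence relation with $K_{0}(C^{*}(R_{0})) \cong G_{0}$ as ordered abelian groups with order unit; existence follows from the Effros--Handelman--Shen realization of simple dimension groups through Bratteli diagrams, with $X$ the corresponding path space and the simplicity of $G_{0}$ translating into minimality of $R_{0}$. At this stage $K_{1}(C^{*}(R_{0})) = 0$, and $C^{*}(R_{0})$ is already AF, hence tracially AF.

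To install $K_{1} \cong G_{1}$, write the countable torsion-free abelian group $G_{1}$ as a direct limit $\varinjlim_{n} H_{n}$ of finitely generated (hence free abelian) subgroups with injective connecting maps. Inductively, I would build $R_{n}$ from $R_{n-1}$ by adjoining finitely many graphs of compactly supported partial homeomorphisms of $X$. The supports are to be carefully chosen clopen sets $U_{n}$ inside Bratteli-type towers of $R_{0}$, thin enough that for every tracial state $\tau$ on $C^{*}(R_{n-1})$ one has $\tau(1_{U_{n}}) < 2^{-n}$. The partial homeomorphisms are designed to match, up to $R_{0}$, pairs of clopen sets of equal tower-height so that their induced maps on $K_{0}$ of the AF subalgebra are the identity; the acyclicity hypothesis on $G_{0}$ is what allows us to arrange this at the vertex level of the Bratteli diagram. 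A six-term exact sequence (in the spirit of Pimsner--Voiculescu for the groupoid extensions $R_{n-1} \subseteq R_{n}$) then yields that $K_{0}$ is unchanged while each new generator contributes one free $\Z$-summand to $K_{1}$, producing $K_{0}(C^{*}(R_{n})) \cong G_{0}$ and $K_{1}(C^{*}(R_{n})) \cong H_{n}$.

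The main obstacle is the simultaneous verification that $C^{*}(R)$ is tracially AF. Given a finite set $\mathcal{F}$, $\epsilon > 0$, and nonzero projection $p_{0}$, I would approximate $\mathcal{F}$ to within $\epsilon/2$ inside some $C^{*}(R_{N})$, choose a large finite-dimensional AF-subalgebra $F \subseteq C^{*}(R_{0})$ with unit $p$ approximately commuting with and containing the compressions of the approximated elements, and control the defect $1-p$ by the projection onto the union of supports of all partial homeomorphisms added at stages $\geq N$. By the summability $\sum_{n \geq N} 2^{-n} \to 0$ and the strict comparison afforded by the simple AF algebra $C^{*}(R_{0})$, the defect $1-p$ is dominated in every trace by $p_{0}$, and this translates via known comparison results in simple AF algebras to unitary equivalence with a subprojection of $p_{0}$, establishing Lin's criterion as restated by Dadarlat in the introduction.

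Finally, minimality passes to the union because enlargements only merge classes; amenability of each $R_{n}$ (inherited from $R_{0}$ via a finite extension) passes to $R$ by a diagonal construction of approximating functions $g_{\ell}$ in condition (ii) of \cite{ADR}; and $K_{*}(C^{*}(R)) \cong \varinjlim K_{*}(C^{*}(R_{n})) \cong (G_{0}, G_{1})$ by continuity of $K$-theory under $C^{*}$-algebraic inductive limits. The delicate points that will require the most work are the explicit choice of partial homeomorphisms effecting the prescribed $K_{1}$ changes and the tracial-smallness estimate making the tracially AF verification go through.
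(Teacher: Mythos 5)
Your overall architecture --- an AF relation $R_{0}$ carrying $G_{0}$, enlarged by adjoining partial homeomorphisms supported on tracially small clopen sets, with a six-term sequence showing $K_{0}$ is preserved and $K_{1}$ is created, and a tracially-AF verification driven by the smallness of the supports --- is genuinely close in spirit to what the paper does. But there is a concrete gap in the $K_{1}$ step. You write $G_{1}=\varinjlim H_{n}$ with $H_{n}\cong\Z^{k_{n}}$ and then assert that ``each new generator contributes one free $\Z$-summand to $K_{1}$,'' so that the map $K_{1}(C^{*}(R_{n-1}))\to K_{1}(C^{*}(R_{n}))$ is a split inclusion onto a direct summand. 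A direct limit of free abelian groups along split inclusions is free abelian, so this scheme can only produce free abelian $G_{1}$; it cannot reach, say, $G_{1}=\Q=\varinjlim(\Z\stackrel{n}{\to}\Z)$, where the old generator must become $n$ times a new one. To realize a general countable torsion-free group you must arrange that the inclusion $C^{*}(R_{n-1})\subseteq C^{*}(R_{n})$ induces a prescribed non-split injective matrix on $K_{1}$, i.e.\ that the unitary built at stage $n-1$ becomes divisible at stage $n$, and your proposal offers no mechanism for this. The paper avoids the issue entirely: it chooses a Bratteli diagram $(W,F)$ whose dimension group is $G_{1}$ (using the embedding of a torsion-free group into $\R$), embeds $(W,F)$ twice disjointly into the diagram for $G_{0}$, and glues the two copies in one step. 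The entire direct-limit structure of $G_{1}$, including all the nontrivial connecting matrices, is then packaged into an auxiliary AF algebra $C^{*}(H)$ with $K_{0}(C^{*}(H))\cong G_{1}$, and the excision results of \cite{Put2,Put3} deliver a six-term sequence in which $G_{1}$ lands in $K_{1}(C^{*}(R))$ once one proves (this is the paper's Proposition 3.5, and it requires an explicit partial isometry in $C^{*}(R_{E})$) that the boundary map $K_{0}(C^{*}(H))\to K_{0}(C^{*}(R_{E}))$ vanishes.

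A secondary but real omission: adjoining the graphs of partial homeomorphisms to an \'{e}tale relation does not automatically yield an \'{e}tale topology on the generated relation. The natural identifications one wants to adjoin here (the paper's $\lambda^{1,0}_{p,q}$) have closed, non-open domains, and the paper must replace them by carefully extended maps $\delta^{1,0}_{p,q}$ and verify closure of the resulting basis under composition and intersection (its Theorem 2.1, Lemma 2.7 and Theorem 2.8); it also warns that the obvious finite-stage pieces $R_{n}$ are \emph{not} nested, so the naive inductive-limit picture of the groupoid has to be handled with care. You should either restrict your intermediate relations so that \'{e}taleness and freeness (absence of isotropy under iterated composition) are verifiable, or adopt the paper's device of separating the sources and ranges of the adjoined maps so that no nontrivial words can form.
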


In view of the classification
theorem of \cite{Lin2}, it is important to note  that Lemma 3.5 and Proposition
 10.7 of \cite{Tu}
show that the 
$C^{*}$-algebra of an \newline 
amenable, 
\'{e}tale equivalence relation satisfies the 
Universal Coefficient Theorem.
Hence, our $C^{*}(R)$ falls within the realm of Lin's result above and 
 the Elliott classification 
program generally.

Let us briefly discuss the main ingredients in the proof.
One begins with the dimension group $G_{0}$. This can be realized 
as the $K_{0}$-group of a simple AF-algebra and we begin with 
a Bratteli diagram, $(V, E)$, for it. This diagram has an infinite path space
which will be our Cantor set $X$. This space 
 has an \'{e}tale equivalence 
relation $R_{E}$ whose $C^{*}$-algebra is the AF-algebra.  Next, we 
find another Bratteli diagram, $(W, F)$,
 whose associated dimension group, 
without the order structure, is isomorphic to $G_{1}$ and we find two
disjoint embeddings of this diagram into $(V,E)$. The equivalence 
relation $R$ is then generated by the original $R_{E}$ and 
all equivalences between the two embedded copies of the paths 
of $(W, F)$. This description as an equivalence relation
is rather easy; what is more subtle is endowing $R$ 
with an \'{e}tale topology.

To verify the conditions of the theorem, the first, second and fifth 
parts are relatively straightforward. The main difficulty lies 
in the computation of the K-theory. The main tool
here is the results of \cite{Put2} and \cite{Put3}. We have $R_{E}$ as an open
subequivalence relation  of $R$ and hence we have an inclusion
$C^{*}(R_{E}) \subseteq C^{*}(R)$. The results of \cite{Put3} essentially allow
us to say that, since the difference between the groupoids 
$R_{E}$ and $R$ is described by the two embeddings 
of the diagram $(W,F)$, the relative K-theory for the inclusion
of their $C^{*}$-algebras can also be described from $(W,F)$.
This is then used to show that the inclusion of $C^{*}(R_{E})$ 
in $C^{*}(R)$ induces an isomorphism on $K$-zero groups
and that the  $K$-one group of $C^{*}(R)$ 
is $G_{1}$.

Let us make some comments on some special cases of the theorem
and other 
potential results along these lines.

Of course, the case $G_{1} = 0$ goes back to the seminal work 
of Elliott \cite{Ell1} on AF-algebras coupled with 
 Renault's construction of them from groupoids \cite{Ren}.
 The case $G_{1} \cong \Z$ is closely linked with the 
 work of the author with Giordano, Herman and Skau \cite{HPS}. 
 
The author, along with Deeley and Strung \cite{DPS}, showed 
that the Jiang-Su algebra  \cite{JS,RW}  could be realized via an
\'{e}tale equivalence relation.
This is a case when $G_{0} \cong \Z$,  $G_{1} = 0$, but 
the lack of projections in the $C^{*}$-algebra 
means that we cannot
use a Cantor set for the space $X$. 
The main idea  is to begin with a minimal, uniquely ergodic 
homeomorphism of a sphere of odd dimension at least $3$. 
There are a couple of ways in which this construction might be extended.
The first, already discussed in   \cite{DPS}, is to begin with a 
minimal homeomorphism of the sphere with a more complex set 
of invariant measures. This, of course, produces more traces 
on the $C^{*}$-algebras. If one  takes the product of 
one of these equivalence relations 
with one of the equivalence relations of Theorem \ref{intro:10},
one would obtain the $K$-theory from \ref{intro:10}, but with 
more traces. This produces examples where the 
$C^{*}$-algebras  are not
 real rank zero nor tracially AF.

Another extension of the results in 
\cite{DPS} is as follows. The key idea in \cite{DPS} is to alter the sphere
and the dynamics so as to insert 
a 'tube' which is invariant under the homeomorphism.
If one instead inserted $k$ of these tubes, one obtains a 
space $Z$ with a minimal homeomorphism $\zeta$ such that 
$K_{0}(C(Z) \times_{\zeta} \Z) \cong \Z^{k} \cong 
K_{1}(C(Z) \times_{\zeta} \Z) $.

Finally, it appears that the statement of \ref{intro:10}
 can be extended 
to include the case $G_{1}$ is finite. 
But this  work is still in progress
 and requires
 more general excision results than those 
 available in \cite{Put2} and \cite{Put3}.
  
The paper is organized as follows. In the second section, we provide 
background information on \'{e}tale equivalence relations. 
We also define our \'{e}tale equivalence relation $R$ and 
prove some basic properties, including amenability and minimality. 
A key part of the structure of $R$ is that it
 contains an open 
subequivalence relation, $R_{E}$, which is AF. 

The third section deals with the computation of the 
$K$-theory of $C^{*}(R)$. The key point here is the presence 
of the AF-equivalence relation $R_{E}$, which provides us 
with a $C^{*}$-subalgebra, $C^{*}(R_{E}) \subseteq C^{*}(R)$ 
and results from \cite{Put2} and \cite{Put3} which allow us to compute the 
relative $K$-theory of this pair. 

The fourth section is devoted to showing that $C^{*}(R)$ is tracially AF.

The author would like to thank Thierry Giordano, Christian Skau and Charles
Starling for helpful comments on an early version of the manuscript,
Marius Dadarlat for enlightening discussions and the referee for 
many helpful suggestions.

\section{Groupoids}
\label{groupoids}

We begin with a general disucssion of \'{e}tale equivalence relations, particularly those on a Cantor set,
 and AF-equivalence relations. 
 
Returning to our undergraduate days, we recall that a function
is defined as a set of ordered pairs. The first thing that 
one usually does with this definition is to forget it
and treat functions as a kind of black box. Here, however, it
is useful to keep this definition. The reason is simply
 that we are used to thinking of equivalence relations as 
 sets of ordered pairs  
 and our equivalence relations will be expressed as the union 
 of a collection of functions. This is particularly
 useful as these functions provide a basis for the \'{e}tale topology
 which we require. 
 
 By a partial homeomorphism between spaces $X$ and $Y$, we mean 
 a function that is a homeomorphism between some open subset of $X$ and 
 an open subset of $Y$.
 We will use following theorem, which is 
 designed specifically for use on the Cantor set. Note that 
 if $f, g \subseteq X \times X$ are partial  homeomorphisms, 
 then their composition is 
 \[
 f \circ g = \{ (x,y) \mid (x,z) \in f, (z,y) \in g \}.
 \]
For any clopen set $U$, we denote the identity function on $U$ 
by $id_{U} = \{ (x, x) \mid x \in U \}$.

\begin{thm}
\label{groupoids:5}
Let $X$ be a Cantor set and 
let $\Gamma$ be a collection of subsets of  $X \times X
$ satisfying the following:
\begin{enumerate}
\item \label{groupoids:5-2} each element $\gamma$ of $\Gamma$ is a partial 
 homeomorphism 
of $X$ with $r(\gamma)$ and $ s(\gamma)$ clopen,
\item \label{groupoids:5-4}$ \{ U \subseteq X \mid U \text{ clopen }, id_{U} \in \Gamma \}$ 
is a basis for the topology 
of $X$, 
\item \label{groupoids:5-6}for all $\gamma$ in $\Gamma$, $\gamma^{-1}$ is in $\Gamma$, 
\item \label{groupoids:5-8}if $\gamma_{1}, \gamma_{2}$ are in $\Gamma$, then so is 
$\gamma_{1} \circ  \gamma_{2}$,
\item \label{groupoids:5-10}if $\gamma_{1}, \gamma_{2}$ are in $\Gamma$, then so is 
$\gamma_{1} \cap  \gamma_{2}$.
\end{enumerate}
Let $R_{\Gamma}$ be the union of the elements of $\Gamma$. Then $\Gamma$ is a basis for a 
topology on $R_{\Gamma}$ in which it is an \'{e}tale equivalence relation.
\end{thm}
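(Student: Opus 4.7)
The plan is to verify three things in order: that $R_{\Gamma}$ is an equivalence relation on $X$, that $\Gamma$ is a basis for a topology on $R_{\Gamma}$, and that with respect to this topology both range and source maps are local homeomorphisms. The first two are essentially formal; the étale condition is where the actual work lies.

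For the equivalence relation structure, reflexivity follows from condition \ref{groupoids:5-4}, since every $x \in X$ lies in some clopen $U$ with $id_{U} \in \Gamma$, so $(x,x) \in R_{\Gamma}$. Symmetry is immediate from condition \ref{groupoids:5-6}. Transitivity reduces to observing that if $(x,y) \in \gamma_{1}$ and $(y,z) \in \gamma_{2}$, then $(x,z) \in \gamma_{1} \circ \gamma_{2}$ by the stated composition convention, and this composition is in $\Gamma$ by condition \ref{groupoids:5-8}. The basis axioms are then free: $\Gamma$ covers $R_{\Gamma}$ by definition, and condition \ref{groupoids:5-10} supplies the intersection property directly.

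For the étale condition, given $(x,y) \in R_{\Gamma}$ and any $\gamma \in \Gamma$ containing it, I would show that $\gamma$ itself is a neighbourhood on which $s$ restricts to a homeomorphism onto the clopen set $s(\gamma)$. Bijectivity and clopenness of the image come from condition \ref{groupoids:5-2}. For continuity, given an open $U \subseteq X$, condition \ref{groupoids:5-4} covers $U \cap s(\gamma)$ by clopens $U_{i}$ with $id_{U_{i}} \in \Gamma$, and unwinding the composition convention yields $id_{U_{i}} \circ \gamma = \{(x',y') \in \gamma : x' \in U_{i}\}$; this lies in $\Gamma$ by condition \ref{groupoids:5-8}, exhibiting $(s|_{\gamma})^{-1}(U)$ as a union of basic opens. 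For the open-map direction, any basic open subset of $\gamma$ has the form $\gamma \cap \gamma'$ with $\gamma' \in \Gamma$, and condition \ref{groupoids:5-10} places $\gamma \cap \gamma'$ in $\Gamma$, so condition \ref{groupoids:5-2} gives that its source is clopen in $X$. The argument for $r$ is parallel, using $\gamma \circ id_{U_{i}}$ in place of $id_{U_{i}} \circ \gamma$. This is the main substantive content of the proof, and it is here that conditions \ref{groupoids:5-2}, \ref{groupoids:5-4}, \ref{groupoids:5-8}, and \ref{groupoids:5-10} interact nontrivially; the rest is essentially bookkeeping.
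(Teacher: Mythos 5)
Your proposal is correct and follows essentially the same route as the paper: the topology comes for free from condition \ref{groupoids:5-10}, continuity of the restricted range/source maps comes from composing with $id_{U}$ via conditions \ref{groupoids:5-4} and \ref{groupoids:5-8}, and openness comes from $\gamma \cap \gamma'$ lying in $\Gamma$ and hence having clopen image under $r$ and $s$. The only difference is cosmetic --- you work with $s$ where the paper works with $r$, and you spell out the equivalence-relation verification that the paper dismisses as routine.
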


\begin{proof}
The fact that a collection of sets satisfying condition \ref{groupoids:5-10}
forms a base for a topology on its union is, of course, trivial.

Let $\gamma$ be in $\Gamma$ and denote by 
$r_{\gamma}$ the restriction of the range map $r$ to $\gamma$. 
As we assume that $\gamma$ is a partial homeomorphism
with clopen domain and range, $r_{\gamma}(\gamma)$ is clopen and 
$r_{\gamma}$ is a bijection from 
$\gamma$ to $r_{\gamma}(\gamma)$. We will show that if 
the former is given the relative topology on $R$ coming
from the base $\Gamma$ and the latter is given the relative
 topology of $X$, then $r_{\gamma}$ is a homeomorphism.
 
 Let $U$ be any clopen subset of $X$ such that 
 $id_{U}$ is in $\Gamma$. It follows
 that 
 \[
 r_{\gamma}^{-1}(U \cap r_{\gamma}(\gamma) ) = \gamma \circ id_{U},
 \]
 which is again in $\Gamma$ by  property \ref{groupoids:5-8}
  of $\Gamma$.
 It follows from property \ref{groupoids:5-4}
  that $r_{\gamma}$ is continuous.
 
 On the other hand, if $\gamma'$ is any other element of $\Gamma$, then 
 $\gamma \cap \gamma'$ is again in $\Gamma$ by \ref{groupoids:5-10}  and 
 hence must be a partial homeomorphism. It follows that 
 $r_{\gamma}(\gamma \cap \gamma') = r(\gamma \cap \gamma')$
 is clopen.  This proves that $r_{\gamma}^{-1}$ is also continuous. Hence,
 $r_{\gamma}$ is a homeomorphism.
 
 The rest of the proof is fairly routine and we omit the details.  
\end{proof}

The easiest example of such a collection is to begin with $\varphi$, a free 
action of a discrete group, $G$, on a Cantor set, $X$. That is, for every 
$g$ in $G$, $\varphi^{g}$ is a homeomorphism of 
$X$ and $\varphi^{g} \circ \varphi^{h} = \varphi^{gh}$, for all 
$g,h$ in $G$. The action is free if $\varphi^{g}(x) = x$ occurs only 
for $g=e$.
 Here,  the collection
$\varphi^{g}|_{U}$, where $g$ varies over $G$ and $U \subseteq X$ 
is clopen, satisfies the hypotheses of \ref{groupoids:5}
and the associated equivalence relation is the orbit relation.

The next class of examples are the AF-equivalence relations.
These may be described rather abstractly as those \'{e}tale 
equivalence relations on a Cantor set which can be written as 
the countable union of an increasing sequence of 
compact, open subequivalence relations \cite{GPS1}.
They can also be described more concretely as being constructed from a
Bratteli diagram.
By a Bratteli diagram, $(V, E)$, we mean a vertex set, $V$, which is 
the union of finite non-empty subsets $V_{n}, n \geq 0$, with 
$V_{0} = \{ v_{0} \}$ and an edge set, $E$, which is 
the union of finite non-empty subsets $E_{n}, n \geq 1$ along with 
initial and terminal maps 
$i: E_{n} \rightarrow V_{n-1}, t:E_{n} \rightarrow V_{n}$.
Associated to such a diagram, we have its infinite path space
\[
X_{E} = \{ (x_{1}, x_{2}, \ldots ) 
\mid x_{n} \in E_{n}, t(x_{n}) = i(x_{n+1}), n \geq 1 \}.
\]
For $m < n$, we let $E_{m,n}$ denote the finite paths from vertices in 
$V_{m}$ to vertices in $V_{n}$ with the obvious maps 
$i:E_{m,n} \rightarrow V_{m}$, 
$t:E_{m,n} \rightarrow V_{n}$. For each $p$ in $E_{m,n}$, we let 
\[
U(p)  = \{ x \in X_{E} \mid x_{i} = p_{i}, m < i \leq n \}.
\]
We observe that, for any such $p$, 
\[
U(p) = \cup_{i(e) = t(p)} U(pe),
\]
and that the sets on the right are pairwise disjoint.
It follows that sets $U(p), p \in E_{0,n}, n \geq 1$,
form the basis for a topology on $X_{E}$. In this topology, 
the sets $U(p)$ are closed as well as open 
and $X_{E}$ is totally disconnected. It is also metrizable.

 We now introduce an \'{e}tale equivalence relation on $X_{E}$.
It will be convenient to let 
$I_{n}$ denote the set of all pairs
 $(p,q)$ with $p,q$  in $E_{0,n}$ satisfying
  $t(p) = t(q)$, for $n \geq 1$.
  We  define 
 \[
 \beta_{p,q} = \{ (x,y) \in X_{E} \times X_{E}
  \mid x_{i} = p_{i}, y_{i} = q_{i}, 
 1 \leq i \leq n, x_{i} = y_{i}, i > n \},
 \]
 for $(p,q)$ in $I_{n}$.
 
 \begin{lemma}
 \label{groupoids:10}
Let  $(p,q), ( p', q')$ be in 
$I_{n}$,  $ n \geq 1$. We have 
\begin{enumerate} 
\item $\beta_{p,q}$ is a partial homeomorphism from
$U(p)$ to $U(q)$,
\item  $\beta_{p,p}$ is the identity function of $U(p)$,
\item $\beta_{p,q}^{-1} = \beta_{q,p}$,
\item $\beta_{p,q} \circ \beta_{p',q'}$ is empty unless
$q=p'$ and equals $\beta_{p,q'}$ in that case,
\item $\beta_{p,q} \cap \beta_{p',q'}$ is empty unless
$p=p'$ and $q=q'$.
\item 
\[
\beta_{p,q} = \cup_{i(e) = t(p)} \beta_{pe,qe},
\]
and the sets on the right are pairwise disjoint.
\end{enumerate}
\end{lemma}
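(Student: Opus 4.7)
The plan is to verify each of the six claims directly from the definition
\[
\beta_{p,q} = \{(x,y) \in X_E \times X_E \mid x_i=p_i,\ y_i=q_i \text{ for } 1\le i\le n,\ x_i=y_i \text{ for } i>n\}.
\]
The only part that requires any real argument is (1); the others reduce to bookkeeping once (1) is in hand.

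For part (1), I would first observe that the condition $t(p)=t(q)$ (built into membership of $(p,q)$ in $I_n$) makes the map
\[
f(x_1,x_2,\ldots) = (q_1,\ldots,q_n,x_{n+1},x_{n+2},\ldots)
\]
well-defined for $x\in U(p)$, since the concatenation is an admissible path. By inspection $\beta_{p,q}$ is the graph of $f$, and an inverse is given by the analogous map using $p$ in place of $q$, so $f\colon U(p)\to U(q)$ is a bijection. To see that $f$ is a homeomorphism, note that for each $m>n$ and each prolongation $pe_{n+1}\cdots e_m$ we have $f(U(pe_{n+1}\cdots e_m)) = U(qe_{n+1}\cdots e_m)$; these cylinder sets form bases for the relative topologies of $U(p)$ and $U(q)$, so $f$ and $f^{-1}$ both pull basic open sets back to basic open sets. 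Since $U(p)$ and $U(q)$ are clopen in $X_E$, this gives (1).

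Parts (2) and (3) are immediate: setting $p=q$ in the definition collapses the first-$n$-coordinate conditions to $x_i=y_i$ throughout, and interchanging the roles of $x$ and $y$ turns $\beta_{p,q}$ into $\beta_{q,p}$. For (4), suppose there exist $(x,z)\in\beta_{p,q}$ and $(z,y)\in\beta_{p',q'}$. Then $z\in U(q)\cap U(p')$; since $(p,q)$ and $(p',q')$ both lie in $I_n$, both $U(q)$ and $U(p')$ are cylinders cut out by the first $n$ coordinates, so this intersection is empty unless $p'=q$. When $p'=q$, reading off the remaining constraints yields $x_i=p_i$ and $y_i=q'_i$ for $i\le n$ and $x_i=y_i$ for $i>n$, which is exactly $\beta_{p,q'}$. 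For (5), if $p\ne p'$ the first coordinates lie in the disjoint cylinders $U(p)$ and $U(p')$, and similarly for $q\ne q'$, so $\beta_{p,q}\cap\beta_{p',q'}=\varnothing$ unless $p=p'$ and $q=q'$.

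Finally, for (6) I would use the partition $U(p)=\bigsqcup_{i(e)=t(p)}U(pe)$ already recorded in the excerpt. Because $t(p)=t(q)$, the same edges $e$ also prolong $q$, and under the homeomorphism $f$ from (1) the slice $U(pe)$ maps onto $U(qe)$. Thus $\beta_{p,q}$, viewed as the graph of $f$, decomposes as the disjoint union of the graphs $\beta_{pe,qe}$ of $f\big|_{U(pe)}$ as $e$ ranges over edges with $i(e)=t(p)$; disjointness is inherited from the disjointness of the cylinders $U(pe)$. The main (minor) obstacle throughout is simply keeping straight that $(p,q),(p',q')\in I_n$ have the same length $n$, which is what makes the cylinder intersections in (4) and (5) behave so rigidly.
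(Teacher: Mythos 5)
Your verification is correct, and it matches the paper exactly in spirit: the paper simply states ``The proof is trivial'' and offers no argument, and your direct check of each item from the definition of $\beta_{p,q}$ (using that $p,q,p',q'$ all have the same length $n$ and that $t(p)=t(q)$ makes the concatenations admissible) is precisely the routine verification being left to the reader. Nothing is missing.
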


The proof is trivial.

\begin{defn}
\label{groupoids:20}
Let $(V, E)$ be a Bratteli diagram. We let $\mathcal{B}_{E}$ denote the 
collection of all sets $\beta_{p,q}$, where $p,q$ are in 
$I_{n}$ with  $ n \geq 1$, along with the empty set
and 
we denote the 
associated \'{e}tale equivalence relation by $R_{E}$.
\end{defn}

It is an easy matter to see that the collection 
$\mathcal{B}_{E}$ satisfies the conditions of 
Theorem \ref{groupoids:5}. The third and fourth conditions 
follow from Lemma \ref{groupoids:10}, provided
one considers $\gamma_{1} = \beta_{p,q}$ and
 $ \gamma_{2} =\beta_{p',q'}$ where $p,q,p',q'$ are all 
 in the same $E_{0,n}$. To deal with the case 
 $(p,q)$ is in $I_{n}$ and $(p',q')$ is in $I_{m}$, 
 it suffices to observe that repeated application 
 of the last part of \ref{groupoids:10}
  allows us to write $\beta_{p,q}$ as 
  a union of $\beta_{p',q'}$ with 
 $p',q'$ in any $E_{0,m}$ with $m > n$.

For a Bratteli diagram as above, we let $b_{p,q}$ 
denote the characteristic function of the compact, open 
set $\beta_{p,q}$ for $(p,q)$ in $I_{n}$. This is a partial isometry
in $C^{*}(R_{E})$. Fixing a vertex $v$ in $V_{n}$, 
all $b_{p,q}$ with $t(p)=t(q)=v$, form 
 a system of matrix units. That is, 
if we let $k(v)$ be the number of paths in $E_{0,n}$ which terminate at $v$,
and index the entries of $M_{k(v)}(\C)$ by such paths, then the map
sending $b_{p,q}$ to the matrix which is $1$ in the $p,q$-entry and zero
elsewhere is an isomorphism between
\[
B_{v} = span\{ b_{p,q}  \mid p,q \in E_{0,n}, t(p) = t(q) = v \}
\]
and $M_{k(v)}$.
It is also easy to check that 
\[
B_{n} = span\{ b_{p,q}  \mid (p,q) \in I_{n}  \} = \oplus_{v \in V_{n}} B_{v}.
\]
Finally, one checks that $B_{n} \subseteq B_{n+1}$, for all $n$ and that 
the union of the $B_{n}$ is dense in $C^{*}(R_{E})$.

We next describe the construction of the equivalence relation
 $R$ whose existence is claimed in Theorem \ref{intro:10}.
We begin by choosing a Bratteli diagram, $(V, E)$, whose associated 
AF-equivalence relation $R_{E}$ on $X_{E}$ has \newline 
$K_{0}(C^{*}(R_{E})) \cong G_{0}$, as ordered abelian groups with order unit.
We note that $(V, E)$ is simple as $G_{0}$ is, in the sense that, 
after telescoping to a subsequence, we can assume that between every vertex
at some level $n$ and another at level $n+1$, there is at least one edge.
 Also, as $G_{0}$ is 
not a cyclic group, we may assume that every edge set in $E$ has at least 
two edges. Moreover, by telescoping and symbol splitting 
the diagram further,
 we can make the cardinality of
both vertex sets and edge sets grow arbitrarily.
(See \cite{GPS1} for more discussion on telescoping
of Bratteli diagrams.)

Next, as $G_{1}$ is a countable torsion-free abelian group, it has an 
embedding into $\R$. This can be seen as follows: first, the fact it is 
torsion free means that the natural inclusion into $G_{1} \otimes_{\Z} \Q$
is injective. This is followed by an inclusion of this countable, 
rational vector space into $\R$. 
With  the relative order, it becomes a simple dimension group
and we can find a Bratteli diagram for it, $(W, F)$. (The actual order
does not matter, only that we can write it as an inductive limit of free
abelian groups with maps between them given by positive matrices.)

Having chosen $(W, F)$, we adjust $(V, E)$ so
  that $\# V_{n} > \# W_{n}$, for all $n \geq 1$
and so that the number of edges between any two vertices in $V_{n-1}$ and $V_{n}$ 
exceeds  $2 \# F_{n}$. 
In consequence, we may choose graph
 embeddings $\xi: W \rightarrow V$ and 
 $\xi^{0}, \xi^{1}: F \rightarrow E$
  with  disjoint images. Observe that $\xi^{0}, \xi^{1}$ also define 
  continuous maps from $X_{F}$ into $X_{E}$.

We let $\beta_{p,q}, \mathcal{B}_{E}, R_{E}$ be as 
described above for the diagram
$(V, E)$. 

Recall that for any \'{e}tale equivalence relation $R$ on 
compact space $X$, a Borel measure $\mu$ on 
$X$ is $R$-invariant if, for any Borel set $U \subseteq R$ such that 
$r|_{U}, s|_{U}$ are injective, we have $\mu(r(U)) = \mu(s(U))$.

\begin{lemma}
\label{groupoids:25}
Let $\mu$ be any $R_{E}$-invariant probability measure 
on $X_{E}$. Then we have 
\[
\mu(\xi^{0}(X_{F})) = \mu(\xi^{1}(X_{F})) = 0.
\]
\end{lemma}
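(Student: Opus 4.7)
The plan is to use $R_{E}$-invariance to show that cylinder sets pinned at a fixed terminal vertex all have equal (and small) measure, then cover $\xi^{0}(X_{F})$ by such cylinders and exploit the enforced growth of edge multiplicities in $(V,E)$ to see that the covering measure tends to zero.

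First, I would observe that for any $n\geq 1$ and any $(p,q)\in I_{n}$, the set $\beta_{p,q}$ is a compact open bisection in $R_{E}$ with $s(\beta_{p,q})=U(p)$ and $r(\beta_{p,q})=U(q)$. Applying $R_{E}$-invariance of $\mu$ gives $\mu(U(p))=\mu(U(q))$ whenever $t(p)=t(q)$. Calling this common value $\mu_{n}(v)$ (for $v=t(p)\in V_{n}$) and using that the cylinders $U(p)$ with $t(p)=v$ are pairwise disjoint (by Lemma \ref{groupoids:10}(vi) iterated), we obtain $k(v,n)\,\mu_{n}(v)\leq 1$, where $k(v,n):=\#\{p\in E_{0,n}\mid t(p)=v\}$.

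Second, because $\xi^{0}$ is a graph embedding, it maps $F_{0,n}$ injectively into $E_{0,n}$, and
\[
\xi^{0}(X_{F})\subseteq \bigcup_{p\in F_{0,n}}U(\xi^{0}(p)),
\]
with the terms on the right pairwise disjoint. Writing $b_{n}(w):=\#\{p\in F_{0,n}\mid t(p)=w\}$, the bound of the previous step yields
\[
\mu(\xi^{0}(X_{F}))\ \leq\ \sum_{p\in F_{0,n}}\mu_{n}(\xi(t(p)))\ \leq\ \sum_{w\in W_{n}}\frac{b_{n}(w)}{k(\xi(w),n)}.
\]

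Third, I would show the right-hand side tends to $0$ as $n\to\infty$, using the freedom to telescope and symbol-split $(V,E)$. The recursion $k(v,n)=\sum_{v'\in V_{n-1}}e_{E}(v',v)\,k(v',n-1)$ together with the hypothesis that each $e_{E}(v',v)$ can be made arbitrarily large gives, after a suitable telescoping, a bound of the form $k(\xi(w),n)\geq N_{n}\cdot\#V_{n-1}\cdot k(\xi(w'),n-1)$ for any $w'$; meanwhile $b_{n}(w)\leq\#F_{n}\cdot\max_{w'}b_{n-1}(w')$. Choosing $N_{n}$ to outpace $\#F_{n}\#W_{n-1}\#W_{n}\cdot 2^{n}$ forces the displayed sum to be dominated by $2^{-n}$, hence $\mu(\xi^{0}(X_{F}))=0$. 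The argument for $\xi^{1}$ is identical.

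The main obstacle is the bookkeeping in the third step: I must make sure the various growth conditions I want — and in particular the control of $b_{n}(w)/k(\xi(w),n)$ summed over $W_{n}$ — are genuinely achievable, simultaneously, by the telescoping and symbol-splitting operations already invoked before the lemma. Once the combinatorial bound is in place, the invariance-driven upper estimate is essentially free.
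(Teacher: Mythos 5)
Your first two steps are exactly the paper's argument: $R_{E}$-invariance applied to the bisections $\beta_{p,q}$ shows that $\mu(U(p))$ depends only on $t(p)$ (equivalently, $\mu$ is given by a function on vertices satisfying $\mu(v_{0})=1$ and $\mu(v)=\sum_{i(e)=v}\mu(t(e))$), and one then covers $\xi^{i}(X_{F})$ by the pairwise disjoint cylinders $U(\xi^{i}(p))$, $p\in F_{0,n}$. The gap is in your third step, and it is an unnecessary one: you appeal to \emph{further} telescoping of $(V,E)$ and to simultaneous growth conditions (``choosing $N_{n}$ to outpace $\#F_{n}\#W_{n-1}\#W_{n}\cdot 2^{n}$'') that are not among the hypotheses fixed before the lemma is stated, and whose joint achievability you explicitly leave unresolved. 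Since the embeddings $\xi^{0},\xi^{1}$, and hence $R$ itself, are built from the diagram as already adjusted, you cannot re-telescope inside the proof of the lemma; you would have to add these conditions to the setup, and in any case they are not needed.

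The single condition already imposed --- that the number of edges of $E_{n}$ between any two vertices of $V_{n-1}$ and $V_{n}$ exceeds $2\#F_{n}$ --- forces the decay directly. Let $e_{E}(u,v)$ denote the number of edges of $E_{n}$ from $u$ to $v$ and set $S_{n}^{i}=\sum_{p\in F_{0,n}}\mu(U(\xi^{i}(p)))=\sum_{p\in F_{0,n}}\mu(\xi(t(p)))$. For each $w'\in W_{n-1}$, the recursion $\mu(\xi(w'))=\sum_{i(e)=\xi(w')}\mu(t(e))$ gives
\[
\sum_{f\in F_{n},\, i(f)=w'}\mu(\xi(t(f)))\ \le\ \sum_{w\in W_{n}}\#F_{n}\,\mu(\xi(w))\ \le\ \tfrac12\sum_{w\in W_{n}}e_{E}(\xi(w'),\xi(w))\,\mu(\xi(w))\ \le\ \tfrac12\,\mu(\xi(w')),
\]
using that at most $\#F_{n}$ edges of $F_{n}$ join $w'$ to any $w$, while more than $2\#F_{n}$ edges of $E_{n}$ join $\xi(w')$ to $\xi(w)$. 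Summing over $F_{0,n-1}$ yields $S_{n}^{i}\le\frac12 S_{n-1}^{i}$, hence $S_{n}^{i}\le 2^{-n}$, and regularity of $\mu$ together with $\xi^{i}(X_{F})=\bigcap_{n}\bigcup_{p\in F_{0,n}}U(\xi^{i}(p))$ finishes the proof. So your framework is sound, but the ``bookkeeping obstacle'' you flag dissolves once you run the measure recursion level by level instead of trying to compare the path-counting functions $b_{n}(w)$ and $k(\xi(w),n)$ globally.
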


\begin{proof}
Let $ i = 0,1$. 
It is well-known (see \cite{Eff}) that a probability measure on 
an AF-equivalence relation $R_{E}$ is determined uniquely by a 
function $\mu: V \rightarrow [0, 1]$ satisfying 
$\mu(v_{0}) = 1$ and $\mu(v) = \sum_{i(e)=v} \mu(t(e))$, for 
all $v$ in $V$. 
From this and the hypothesis on the number of edges 
in $F_{n}$ that 
\[
\mu(\xi^{i}(F_{0,n})) \leq 2^{-n}, 
\]
for all $n \geq 1$. As $\xi^{i}(X_{F}) = \cap_{n} \xi^{i}(F_{0,n})$
and the measure is regular, this completes the proof.
\end{proof}

As before, we denote the set of all pairs $p,q$ in $E_{0,n}$
with $t(p) = t(q)$ by $I_{n}$ and now also denote the set of
all such pairs with 
$t(p) = t(q) \in \xi(W)$ by $I_{n}^{W}$.

Now suppose that $p, q$ are two paths in $I_{n}^{W}$, for
some $n \geq 1$.
We define  subsets 
$\lambda^{1,0}_{p,q},\lambda^{0,1}_{p,q}, \delta^{1,0}_{p,q}, \delta^{0,1}_{p,q}$ of 
$X_{E} \times X_{E}$  as follows.
First, we define 
$\lambda^{1,0}_{p,q} $ to be the set of all pairs:
\[ 
\begin{array}{c} 
( p_{1}, \ldots, p_{n}, \xi^{1}(f_{n+1}), \xi^{1}(f_{n+2}), \ldots ) \\
 ( q_{1}, \ldots, q_{n}, \xi^{0}(f_{n+1}), \xi^{0}(f_{n+2}), \ldots ) \\
f_{i} \in F_{i}, i > n \end{array}
\]
It is a simple matter to
 see that $\lambda^{1,0}_{p,q}$ is a bijection. In fact, 
 its inverse is given by using the same formula, simply interchanging 
 $p$ and $q$ and 
 $\xi^{0}$ and $\xi^{1}$. We denote $(\lambda^{1,0}_{p,q})^{-1}$ 
  by $\lambda^{0,1}_{q,p}$. Observe that the sets $\lambda_{p,q}^{i,1-i}$ 
  are all disjoint from $R_{E}$; that is no pair in one is tail equivalent.

These are not, however, partial homeomorphisms as their domains are 
closed but not open. We are going to extend the definition
to define $\delta^{1,0}_{p,q}$ which will be a partial homeomorhphism from
$\cup U(p\xi^{1}(f))$ to 
$\cup U(q\xi^{0}(f))$, where, in both, the union is over all $f$ in 
$F_{n+1}$ with $\xi(i(f)) = v$. We denote the former clopen set by 
$U^{1}(p)$ and the latter by $U^{0}(q)$.
Every point $x$ in 
$\cup_{f \in F_{n+1}} U(p\xi^{1}(f))$  begins with $p$, then 
contains at least one edge of $\xi^{1}(F)$. Then one of three things can occur:
all edges after $p$ are in $\xi^{1}(F)$ (as is the case for the 
the map $\lambda_{p,q}^{1,0}$ above), the first edge after $p$ not in 
$\xi^{1}(F)$ lies in $\xi^{0}(F)$ or the first edge after $p$ not in 
$\xi^{1}(F)$ is also not in $\xi^{0}(F)$. We treat these separately, as follows.

We define $\delta^{1,0}_{p,q}$ to be the union of $\lambda^{1,0}_{p,q}$ 
with all pairs of sequences of the   form:
\[
\begin{array}{c} (p_{1}, \ldots, p_{n}, \xi^{1}(f_{n+1}), \ldots, \xi^{1}(f_{k}), 
\xi^{0}(f_{k+1}), x_{k+2}, \ldots )  \\ 
(q_{1}, \ldots, q_{n}, \xi^{0}(f_{n+1}), \ldots, \xi^{0}(f_{k}), 
\xi^{1}(f_{k+1}), x_{k+2}, \ldots ) \\
k \geq  n+1, f_{n+1}, \ldots, f_{k+1} \in F,
\end{array}
\]
and also all pairs of the form:
\[
\begin{array}{c} (p_{1}, \ldots, p_{n}, \xi^{1}(f_{n+1}), \ldots, \xi^{1}(f_{k}), 
x_{k+1}, x_{k+2}, \ldots )  \\ 
 (q_{1}, \ldots, q_{n}, \xi^{0}(f_{n+1}), \ldots, \xi^{0}(f_{k}), 
x_{k+1}, x_{k+2}, \ldots ) \\
k > n+1, f_{n+1}, \ldots, f_{k} \in F, x_{k+1} \notin \xi^{0}(F) \cup \xi^{1}(F)
\end{array}
\]

We first observe that the three types of pairs are distinct. In fact, their 
images under the source map (and also under the  range map) 
are pairwise disjoint.
 It is also a simple matter to
 see that $\delta^{1,0}_{p,q}$ is a partial homeomorphism from $U^{1}(p)$ to
 $U^{0}(q)$, as claimed. 
 In fact, its inverse is given by using the same formula,
  simply interchanging $p$ and $q$ and 
 $\xi^{0}$ and $\xi^{1}$. We denote $(\delta^{1,0}_{p,q})^{-1}$ 
  by $\delta^{0,1}_{q,p}$. We can summarize this in the following way.
 
 \begin{lemma}
 \label{groupoids:27}
 Let $n \geq 1$, $(p,q)$ be in $I_{n}^{W}$ and $i = 0,1$. 
 As defined above, $\delta^{i, 1-i}_{p,q}$ is a partial homeomorphism 
 of $X_{E}$. Moreover, we have 
 \[
 \delta_{p,q}^{i,1-i} \delta_{q,p}^{1-i,i} = \cup_{ \{f \in F_{n+1}, \xi(i(f)) = t(p) \} }
  \beta_{p\xi^{i}(f), p\xi^{i}(f)} 
  \]
  where the sets in the union are pairwise disjoint.
 \end{lemma}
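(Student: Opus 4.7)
The plan is to verify the lemma by describing the set-theoretic structure of $\delta^{i,1-i}_{p,q}$, confirming it is a partial homeomorphism, and then reading off the composition formula. It suffices to handle the case $i=1$, since by the author's convention $\delta^{0,1}_{p,q} = (\delta^{1,0}_{q,p})^{-1}$. For every $x \in U^1(p)$, let $m(x) \in \{1,2,\ldots\} \cup \{\infty\}$ be the largest integer $m$ such that $x_{n+1}, \ldots, x_{n+m}$ all lie in $\xi^1(F)$. Because $\xi^0(F)$ and $\xi^1(F)$ are disjoint subsets of $E$, the three clauses in the definition of $\delta^{1,0}_{p,q}$ correspond precisely to the trichotomy (i) $m(x) = \infty$; (ii) $m(x) < \infty$ with $x_{n+m(x)+1} \in \xi^0(F)$; (iii) $m(x) < \infty$ with $x_{n+m(x)+1} \notin \xi^0(F) \cup \xi^1(F)$. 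These partition $U^1(p)$, and in each case the formula assigns a unique image in $U^0(q)$. Applying the mirror formula (interchanging $p \leftrightarrow q$ and $\xi^0 \leftrightarrow \xi^1$) produces the set-theoretic inverse, which is exactly $\delta^{0,1}_{q,p}$; thus $\delta^{1,0}_{p,q}$ is a bijection from $U^1(p)$ onto $U^0(q)$.

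For continuity I would establish the local-determination property: if $x, y \in U^1(p)$ agree on the first $n+k$ coordinates for some $k \geq 1$, then $\delta^{1,0}_{p,q}(x)$ and $\delta^{1,0}_{p,q}(y)$ also agree on the first $n+k$ coordinates. When $m(x) \geq k$, one also has $m(y) \geq k$, and in each of the three cases the first $n+k$ entries of the image are $q_1, \ldots, q_n, \xi^0(f_1), \ldots, \xi^0(f_k)$, where $x_{n+j} = \xi^1(f_j)$; since $y$ shares these entries, the $f_j$ are the same. When $m(x) < k$, the position $n+m(x)+1$ at which the case of $x$ is decided lies within the first $n+k$ coordinates, so $m(y) = m(x)$ and $y$ falls in the same case with the same swap data, producing identical first $n+k$ image entries. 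By symmetry the inverse is continuous as well, so $\delta^{1,0}_{p,q}$ is a partial homeomorphism between the clopen sets $U^1(p)$ and $U^0(q)$.

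The composition identity is then immediate. Under the paper's convention $f \circ g = \{(x,y) : \exists z,\ (x,z) \in f,\ (z,y) \in g\}$, and with $\delta^{0,1}_{q,p}$ the relational inverse of $\delta^{1,0}_{p,q}$, the product $\delta^{1,0}_{p,q} \circ \delta^{0,1}_{q,p}$ equals the identity function on the source $U^1(p)$. Writing $U^1(p) = \bigcup U(p\xi^1(f))$ with $f$ ranging over $\{f \in F_{n+1} : \xi(i(f)) = t(p)\}$, these cylinder sets are pairwise disjoint (they differ in their $(n+1)$-st coordinate), and the identity on $U(p\xi^1(f))$ is exactly $\beta_{p\xi^1(f), p\xi^1(f)}$, yielding the stated disjoint union. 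The case $i=0$ follows from the case $i=1$ by taking relational inverses, with the composition then being the identity on $U^0(p)$. The only obstacle in all of this is the mild bookkeeping of the three-way case analysis, but no substantive difficulty arises.
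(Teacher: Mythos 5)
Your proposal is correct and follows essentially the same route as the paper, which states this lemma as a summary of the preceding discussion (the trichotomy of the three clause types with disjoint sources and ranges, the mirror formula as inverse, and the composition being the identity on $U^{i}(p)$ decomposed into the cylinders $U(p\xi^{i}(f))$). The only difference is that you make explicit the local-determination argument for continuity, which the paper leaves as ``a simple matter''; that is a welcome filling-in of detail rather than a new approach.
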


  The following is obvious from the definitions, but worth stating
  explicitly.
  
 \begin{lemma}
 \label{groupoids:30}
 For $p,q$  in $I_{n}^{W}$ and $i = 0,1$,
  we have 
 \[
 \delta^{i, 1-i}_{p,q} - R_{E} = \lambda^{i,1-i}_{p,q}.
 \]
 \end{lemma}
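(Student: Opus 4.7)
The plan is to prove containment in both directions using the explicit description of $\delta^{i,1-i}_{p,q}$ as a union of three types of pairs, together with the fact that the images of the embeddings $\xi^{0}$ and $\xi^{1}$ are disjoint in $E$.

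First I would show that $\lambda^{i,1-i}_{p,q} \cap R_{E} = \emptyset$. A pair of sequences in $\lambda^{1,0}_{p,q}$, for instance, has the form
\[
(p_{1},\ldots,p_{n},\xi^{1}(f_{n+1}),\xi^{1}(f_{n+2}),\ldots),\qquad
(q_{1},\ldots,q_{n},\xi^{0}(f_{n+1}),\xi^{0}(f_{n+2}),\ldots),
\]
and for each $i>n$ the edges $\xi^{1}(f_{i})$ and $\xi^{0}(f_{i})$ are distinct, since $\xi^{0}(F)$ and $\xi^{1}(F)$ were chosen to be disjoint subsets of $E$. Thus the two sequences differ in \emph{every} coordinate beyond $n$, so the pair cannot be tail equivalent, i.e.\ it is not in $R_{E}$. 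This gives $\lambda^{i,1-i}_{p,q}\subseteq \delta^{i,1-i}_{p,q}-R_{E}$.

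Next I would verify the reverse inclusion by showing that the remaining two families of pairs in the definition of $\delta^{i,1-i}_{p,q}$ lie in $R_{E}$. Both remaining families were written so that from some coordinate $k+1$ onward the two sequences agree coordinatewise (in the second family the tails are both $(\xi^{1-i}(f_{k+1}),x_{k+2},\ldots)$ vs.\ $(\xi^{i}(f_{k+1}),x_{k+2},\ldots)$—wait, I should inspect this carefully: in the second family both tails after position $k+1$ are $x_{k+2},x_{k+3},\ldots$, and in the third family the tails from position $k+1$ onward are $x_{k+1},x_{k+2},\ldots$ in both coordinates). Either way, after a finite index the two sequences coincide, which is exactly the tail-equivalence defining $R_{E}$. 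Hence every element of $\delta^{i,1-i}_{p,q}\setminus \lambda^{i,1-i}_{p,q}$ lies in $R_{E}$, giving $\delta^{i,1-i}_{p,q}-R_{E}\subseteq \lambda^{i,1-i}_{p,q}$.

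Combining the two inclusions yields the claimed equality. The only subtle point, and the step I would be careful with, is the first: the argument that pairs in $\lambda^{i,1-i}_{p,q}$ are not tail equivalent uses exactly the disjointness of $\xi^{0}(F)$ and $\xi^{1}(F)$ established when the two graph embeddings of $(W,F)$ into $(V,E)$ were constructed. Everything else is a direct reading of the definition of $\delta^{i,1-i}_{p,q}$.
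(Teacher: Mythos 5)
Your proof is correct and is precisely the argument the paper has in mind: the paper omits a proof, calling the lemma ``obvious from the definitions,'' and your two inclusions (disjointness of $\lambda^{i,1-i}_{p,q}$ from $R_{E}$ via the disjoint images of $\xi^{0}$ and $\xi^{1}$, and tail equality for the other two families in the definition of $\delta^{i,1-i}_{p,q}$) are exactly the details being taken for granted. The only cosmetic issue is the ``wait, I should inspect this carefully'' aside, which should be removed from a final write-up.
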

 
We need an analogue of property 5 of Lemma \ref{groupoids:10} for 
our elements $\delta^{i,1-i}_{p,q}$, which is the following.

\begin{lemma}
\label{groupoids:40}
Let $(p, q)$ be in $I_{n}^{W}$ with $t(p) = \xi(w)$, 
for some $w$ in $W_{n}$.
Define 
\begin{eqnarray*}
A & = &  \{f \in F_{n+1} \mid 
i(f) = w \} \\
 B &  =  &  \{ (f, f') \in  F_{n+1} \times F_{n+2} \mid \\ &  & 
 i(f) = w, t(f) = i(f')  \} \\
 C & = &  
 \{(f, e) \in F_{n+1} \times E_{n+2} 
  \mid  \\ &  &  
 i(f) = w, \xi(t(f)) = i(e), e_{k+1} 
 \notin \xi^{0}(F_{n+2}) \cup \xi^{1}(F_{n+2})  \}.
 \end{eqnarray*}

 Then we have 
 \begin{eqnarray*}
 \delta^{1,0}_{p,q} & = &  \left( \cup_{f \in A} \delta^{1,0}_{p \xi^{1}(f), q \xi^{0}(f)}
 \right) \\
 &   &    \cup \left( 
 \cup_{(f,f') \in B}
   \beta_{p \xi^{1}(f) \xi^{0}(f'), q \xi^{0}(f) \xi^{1}(f') }
 \right) \\
    &   & 
  \cup \left( \cup_{(f,e) \in C} \beta_{p \xi^{1}(f) e, q \xi^{0}(f) e}
 \right)
 \end{eqnarray*}
 Moreover, the three sets on the right are pairwise disjoint.
\end{lemma}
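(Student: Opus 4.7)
The plan is to prove the decomposition by organising every pair $(x,y) \in \delta^{1,0}_{p,q}$ according to two successive pieces of coordinate data — the edge $x_{n+1}$ (which always lies in $\xi^1(F_{n+1})$ and hence is $\xi^1(f)$ for a unique $f \in A$, with $y_{n+1} = \xi^0(f)$), and then the edge $x_{n+2}$. The three cases ``$x_{n+2}$ is in $\xi^1(F_{n+2})$'', ``$x_{n+2}$ is in $\xi^0(F_{n+2})$'', and ``$x_{n+2}$ is in neither'' correspond exactly to the three terms indexed by $A$, $B$, $C$ on the right-hand side; this also makes the pairwise disjointness transparent, since these three alternatives are mutually exclusive conditions on $x_{n+2}$ and $A$, $B$, $C$ further separate the cases by the label $f$ (and, in the $B$, $C$ cases, by $f'$ or $e$).

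First I would show the forward inclusion $(\subseteq)$. Take $(x,y) \in \delta^{1,0}_{p,q}$ and let $f \in A$ be determined by $x_{n+1} = \xi^1(f)$. If $x_{n+2} \in \xi^1(F_{n+2})$, write $x_{n+2} = \xi^1(f')$ with $i(f') = t(f)$; reading the three defining clauses of $\delta^{1,0}_{p,q}$ and shifting the path index by one, the pair $(x,y)$ satisfies exactly the defining conditions of $\delta^{1,0}_{p\xi^1(f), q\xi^0(f)}$, so it lies in the $A$-term. If $x_{n+2} = \xi^0(f') \in \xi^0(F_{n+2})$, then $(x,y)$ must be of ``kind 2'' in the definition of $\delta^{1,0}_{p,q}$ with $k = n+1$, forcing $y_{n+2} = \xi^1(f')$ and $x_i = y_i$ for $i > n+2$; this puts $(x,y)$ in $\beta_{p\xi^1(f)\xi^0(f'),\, q\xi^0(f)\xi^1(f')}$ with $(f,f') \in B$. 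Finally, if $x_{n+2} \notin \xi^0(F_{n+2}) \cup \xi^1(F_{n+2})$, it must be of ``kind 3'' with $k = n+1$, so $y_{n+2} = x_{n+2}$ and the tails agree; writing $e = x_{n+2}$ gives $(x,y) \in \beta_{p\xi^1(f)e,\, q\xi^0(f)e}$ with $(f,e) \in C$.

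The reverse inclusion $(\supseteq)$ is immediate by reading the same three alternatives back into the definition of $\delta^{1,0}_{p,q}$: any element of $\delta^{1,0}_{p\xi^1(f), q\xi^0(f)}$ is, after prepending $p$ and $q$ and shifting, either a kind-1, kind-2, or kind-3 pair for $\delta^{1,0}_{p,q}$ (with parameter $k \geq n+2$); each $\beta_{p\xi^1(f)\xi^0(f'),\, q\xi^0(f)\xi^1(f')}$ is manifestly a kind-2 pair at $k = n+1$; and each $\beta_{p\xi^1(f)e,\, q\xi^0(f)e}$ is a kind-3 pair at $k = n+1$.

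The one delicate bookkeeping point — which I expect to be the main nuisance rather than a genuine obstacle — is that the $A$-term $\delta^{1,0}_{p\xi^1(f), q\xi^0(f)}$ already contains within itself the analogue of $\lambda^{1,0}$ (the ``all $\xi^1$'' tails) as well as further breakings-off at deeper levels, so one must verify that this term captures precisely those pairs of $\delta^{1,0}_{p,q}$ for which $x_{n+2} \in \xi^1(F_{n+2})$ and nothing more. This is handled by the case analysis above on the second post-$p$ coordinate, which ensures that the three buckets partition $\delta^{1,0}_{p,q}$ cleanly. Pairwise disjointness of the three displayed unions then follows because the $s$-images of the three families of sets are contained respectively in $\{x : x_{n+2} \in \xi^1(F_{n+2})\}$, $\{x : x_{n+2} \in \xi^0(F_{n+2})\}$, and $\{x : x_{n+2} \notin \xi^0(F_{n+2}) \cup \xi^1(F_{n+2})\}$; disjointness within each family is immediate from the disjointness properties of the $\beta$-sets recorded in Lemma \ref{groupoids:10}.
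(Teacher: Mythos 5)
Your proposal is correct and follows essentially the same route as the paper: the paper's proof is precisely the case analysis on whether $x_{n+2}$ lies in $\xi^{1}(F_{n+2})$, in $\xi^{0}(F_{n+2})$, or in neither, with these three mutually exclusive alternatives yielding the $A$-, $B$-, and $C$-terms and the pairwise disjointness. Your treatment of the reverse inclusion and the disjointness is somewhat more explicit than the paper's (which dismisses both as clear), but no new idea is involved.
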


\begin{proof}
If $(x,y)$  is in  $\delta^{1,0}_{p,q}$, then 
$(x_{1}, \ldots, x_{n}) = p$, and $x_{n+1} = \xi^{1}(f_{n+1})$, for 
some $f_{n+1}$ in 
$F_{n+1}$. Now, there are three mutually distinct possibilities for $x_{n+2}$. 
The first is $x_{n+2}$ is in $\xi^{1}(F_{n+2})$. In this case,  $(x,y)$ 
 is in 
$\delta^{1,0}_{p \xi^{1}(f_{n+1}), q \xi^{0}(f_{n+1})}$. The second is that 
 $x_{n+2}$ is in $\xi^{0}(F_{n+2})$. In this case,  $(x,y)$ 
is in \newline
$\beta_{p \xi^{1}(f_{n+1}) \xi^{0}(f_{n+2}),
 q \xi^{0}(f_{n+1}) \xi^{1}(f_{n+2})}$. 
 Finally, if $x_{n+2}$ is  in neither $\xi^{0}(F)$ nor $\xi^{1}(F)$, 
then $(x,y)$ is in   
$\beta_{p \xi^{1}(f_{n+1}) x_{n+2},q \xi^{1}(f_{n+1}) x_{n+2}}$.
The reverse inclusions are clear.
\end{proof}

The first immediate consequence of the lemma is the following.

 \begin{thm}
 \label{groupoids:50}
 The collection
 \begin{eqnarray*}
 \mathcal{B} & =  &  \{ \beta_{p,q} \mid (p, q) \in I_{n}, n \geq 1 \}  \\
    &   &  
   \cup \{ \delta^{1,0}_{p,q}, \delta_{p,q}^{0,1} \mid (p, q) \in I_{n}^{W}, 
   n \geq 1 \} \\
    &  &  \cup \{ \emptyset \}
   \end{eqnarray*}
   satisfies the hypotheses of Theorem \ref{groupoids:5}.
   We let $R$ be the associated \'{e}tale equivalence relation.
   The equivalence relation $R_{E}$ is an open subequivalence relation of $R$.
 \end{thm}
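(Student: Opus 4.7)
The plan is to verify the five conditions of Theorem \ref{groupoids:5} for the collection $\mathcal{B}$, which splits naturally into three strata: the AF pieces $\beta_{p,q}$, the ``type-$(1,0)$'' pieces $\delta^{1,0}_{p,q}$, and their inverses $\delta^{0,1}_{p,q}$. Condition (1) is handed to us by Lemma \ref{groupoids:10} for the $\beta$'s, and by the construction immediately preceding Lemma \ref{groupoids:27} for the $\delta$'s, where the decomposition into the three types of pairs forces $\delta^{i,1-i}_{p,q}$ to be a partial homeomorphism from the clopen set $U^{i}(p)$ to $U^{1-i}(q)$. Condition (2) is clear because $id_{U(p)} = \beta_{p,p} \in \mathcal{B}$ and the cylinders $U(p)$ are a basis for $X_{E}$. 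Condition (3) is by construction: $\beta_{p,q}^{-1} = \beta_{q,p}$ and $(\delta^{i,1-i}_{p,q})^{-1} = \delta^{1-i,i}_{q,p}$.

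Conditions (4) and (5) are the substantive part, and my strategy is to reduce to a common level by a \emph{refinement principle}. Lemma \ref{groupoids:10}(6) already refines $\beta_{p,q}$ into a disjoint union of $\beta$'s at any deeper level. Lemma \ref{groupoids:40} plays the analogous role for the $\delta$'s: applied iteratively, it expresses $\delta^{1,0}_{p,q}$, $(p,q)\in I_{n}^{W}$, as a finite disjoint union of $\delta^{1,0}_{p',q'}$'s with $(p',q') \in I_{m}^{W}$ together with $\beta$'s in $I_{k}$ for various $n+2 \leq k \leq m+1$, for any chosen $m \geq n$. Given arbitrary $\gamma_{1}, \gamma_{2} \in \mathcal{B}$ possibly at different levels, I would push both to a common level $m$ via these refinements and then compose or intersect termwise.

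At a common level the analysis is a finite case check. Same-level $\beta \circ \beta$ and $\beta \cap \beta$ are given by Lemma \ref{groupoids:10}. A $\beta_{p,q}$ and a $\delta^{1,0}_{p',q'}$ at the same level are disjoint because any pair in $\delta^{1,0}_{p',q'}$ has its source $(n{+}1)$-st edge in $\xi^{1}(F)$ while its range $(n{+}1)$-st edge is in $\xi^{0}(F)$ (so they cannot agree past level $n$). Compositions $\beta \circ \delta$ and $\delta \circ \beta$ reduce to a single $\delta$ (or empty) since composing with a $\beta$ merely replaces a prefix. Two $\delta$'s of the same type $\delta^{1,0}_{p,q} \circ \delta^{1,0}_{p',q'}$ are empty, for the intermediate path would have to begin at position $n{+}1$ with both a $\xi^{0}(F)$-edge and a $\xi^{1}(F)$-edge, contradicting the disjointness of the embeddings; similarly, two opposite-type $\delta$'s intersect trivially. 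The remaining opposite-type composition $\delta^{1,0}_{p,q} \circ \delta^{0,1}_{q,p}$ is exactly the partial identity on $U^{1}(p)$ recorded in Lemma \ref{groupoids:27}, which is already a disjoint union of $\beta$'s in $\mathcal{B}$. The final claim that $R_{E}$ is an open subequivalence relation of $R$ is then immediate, since $\mathcal{B}_{E} \subseteq \mathcal{B}$ and so $R_{E}$ is a union of basic open subsets of $R$.

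The main obstacle is the bookkeeping in the refinement step: one must iterate Lemma \ref{groupoids:40} carefully and track the three qualitatively different types of pairs in the definition of $\delta^{1,0}_{p,q}$ to ensure that after refining one really can compare any two elements of $\mathcal{B}$ at a common depth. Once this is in place, the case analysis is mechanical, invoking only the disjointness of $\xi^{0}$ and $\xi^{1}$ together with the same-level identities assembled in Lemmas \ref{groupoids:10}, \ref{groupoids:27}, and \ref{groupoids:40}.
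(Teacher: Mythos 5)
Your proposal is correct and follows essentially the same route as the paper: conditions (1)--(3) are dispatched directly, and conditions (4)--(5) are verified by refining both elements to a common level (via Lemma \ref{groupoids:10}(6) for the $\beta$'s and iterated applications of Lemma \ref{groupoids:40} for the $\delta$'s) and then running the same same-level case analysis, using the disjointness of $\xi^{0}(F)$ and $\xi^{1}(F)$ and Lemma \ref{groupoids:27} for the composition $\delta^{1,0}_{p,q}\circ\delta^{0,1}_{q,p}$. The only omitted subcase, the same-level intersection of two $\delta$'s of the same type, is handled by the same source/range disjointness you already invoke.
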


\begin{proof}
The first three  conditions are clear. The fourth and fifth must be dealt with 
in several cases. If $\gamma_{1}$ and $\gamma_{2}$ are both of the form 
$\beta_{p,q}$, both properties  follow from Lemma \ref{groupoids:10}. 

The next case is to consider $\beta_{p,q}$ and $\delta^{1,0}_{p',q'}$, with 
$(p,q)$ in $I_{n}$ and $(p',q')$ in $I_{m}^{W}$. First assume that $n=m$.
In this case, it is a simple matter to check that 
\[
\beta_{p,q} \circ \delta^{1,0}_{p', q'} = \left\{ \begin{array}{cl}
 \delta^{1,0}_{p,q'},  & q = p' \\ \emptyset, & q \neq p' 
 \end{array} \right.
\] 
and 
\[
\delta^{1,0}_{p', q'} \circ \beta_{p,q}   = \left\{ \begin{array}{cl}
 \delta^{1,0}_{p',q},  & q' = p \\ \emptyset, & q' \neq p 
 \end{array} \right.
\] 
It is also simple to observe that for any $(x,y)$ in $\delta^{1,0}_{p',q'}$, 
$x_{n+1} \neq y_{n+1}$ and hence
$\beta_{p,q} \cap \delta^{1,0}_{p', q'} = \emptyset$.
Similar arguments apply if we replace $\delta^{1,0}_{p',q'}$ with 
$\delta^{0,1}_{q',p'}$.

Now let us suppose that $n < m$. Using repeated applications of 
part 5 of Lemma \ref{groupoids:10}, we 
can replace $\beta_{p,q}$ with a union of $\beta_{p'', q''}$ with 
$(p'', q'')$ in $I_{m}$ and then the previous case applies, for both properties.

Finally, let us suppose that $m < n$.  Here, repeated applications of 
Lemma \ref{groupoids:40} allow us to replace $\delta^{1,0}_{p',q'}$ with a union 
of $\delta^{1,0}_{p'',q''}$ with $(p'', q'')$ in $I_{n}^{W}$ and some 
$\beta_{p''',q'''}$ (it is not necessary to specify the lengths of $p'''$). 
Then the previous case and part 5 of \ref{groupoids:5} yields both of the  
desired properties.

The next case to consider is $\gamma_{1} = \delta^{1,0}_{p,q}$ 
and $\gamma_{2} = \delta^{1,0}_{p',q'}$. We again begin with $n=m$. Here, 
$r(\delta^{1,0}_{p,q}) = U^{0}(q)$ and $s(\delta^{1,0}_{p,q}) = U^{1}(p)$. 
It follows easily that $\delta^{1,0}_{p,q} \circ \delta^{1,0}_{p',q'} = \emptyset$
 while $\delta^{1,0}_{p,q} \cap \delta^{1,0}_{p',q'}$ is empty unless
 $p=p'$ and $q=q'$. 
 
 Next, we suppose that $n < m$. We use Lemma \ref{groupoids:10} to 
 replace $\delta^{1,0}_{p,q}$ with a union of 
 $\delta^{1,0}_{p'',q''}$ with $(p'', q'')$ in $I_{m}^{W}$ and 
 $\beta_{p''', q'''}$ with $(p''', q''')$ in $I_{m'}$, for varying $m'$. 
 Here both results follow from the previous case
  and the cases already established above.
 
 The final case to consider is $\gamma_{1} = \delta^{1,0}_{p,q}$ 
and $\gamma_{2} = \delta_{p',q'}^{0,1}$. We again begin with $n=m$. Here, 
$r(\delta^{1,0}_{p,q}) = U^{0}(q)$ and $s(\delta^{1,0}_{p,q}) = U^{1}(p)$. 
It follows easily that $\delta_{p,q} \cap \delta_{p',q'}^{-1} = \emptyset$
 while $\delta_{p,q}^{1,0} \circ \delta_{p',q'}^{0,1}$ is empty unless
 $p=p'$ and $q=q'$ and then the composition is simply 
 the identity function on $U^{1}(p)$.
 
 Next, we suppose that $n < m$. We use Lemma \ref{groupoids:10} to 
 replace $\delta^{1,0}_{p,q}$ with a union of 
 $\delta^{1,0}_{p'',q''}$ with $(p'', q'')$ in $I_{m}^{W}$ and 
 $\beta_{p''', q'''}$ with $(p''', q''')$ in $I_{m'}$, for varying $m'$. 
 Here, both results follow from the previous case
  and the cases already established above.
\end{proof}

   The first desired property of our 
   equivalence relation $R$ is easy: as $R_{E}$ is a subequivalence 
   relation and is minimal,  every  $R_{E}$-equivalence class is dense in
   $X$ and hence the same is true for $R$.
   
   \begin{thm}
   \label{groupoids:55}
   The equivalence relation $R$ is minimal.
   \end{thm}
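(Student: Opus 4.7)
The plan is to exploit the inclusion $R_E \subseteq R$ established in Theorem \ref{groupoids:50} together with the known minimality of the AF-equivalence relation $R_E$. The author has already signalled this strategy in the paragraph immediately preceding the statement, so the proof should be essentially a one-line argument dressed up with a brief justification of why $R_E$ itself is minimal.

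First, I would recall that the Bratteli diagram $(V,E)$ was chosen (after telescoping) to be simple in the sense that for every $n$ and every pair of vertices $v \in V_{n-1}$, $w \in V_n$ there is at least one edge from $v$ to $w$. This is a standard criterion ensuring that the AF-equivalence relation $R_E$ is minimal: given any $x \in X_E$ and any basic open set $U(p)$ with $p \in E_{0,n}$, simplicity of the diagram provides a finite path $q \in E_{0,n}$ extending $p$ whose terminal vertex equals $t(x_n)$, so the tail-equivalent point obtained from $x$ by replacing its first $n$ coordinates by $q$ lies in $U(p)$. Hence the $R_E$-equivalence class of $x$ meets every basic open set and is therefore dense in $X_E$.

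Next, since $R_E \subseteq R$ by Theorem \ref{groupoids:50}, the $R$-equivalence class of any point $x \in X_E$ contains the $R_E$-equivalence class of $x$, and so it is dense as well. This is exactly the definition of minimality for $R$.

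There is no real obstacle here; the statement follows immediately once the two ingredients (simplicity of the diagram forcing minimality of $R_E$, and the containment $R_E \subseteq R$) are in hand. The only point requiring a modicum of care is to make sure the simplicity/telescoping convention for $(V,E)$ introduced earlier in the section is invoked explicitly, so that minimality of $R_E$ is not taken entirely for granted.
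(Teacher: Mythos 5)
Your proof is correct and is essentially the paper's own argument: the paper disposes of minimality in one sentence by noting that $R_{E}$ is a minimal subequivalence relation of $R$, so every $R$-class contains a dense $R_{E}$-class. Your added justification of why $R_{E}$ itself is minimal is fine in substance (though the extending path $q$ should live in some $E_{0,m}$ with $m>n$ and satisfy $t(q)=t(x_{m})$, rather than in $E_{0,n}$ itself); the paper simply takes this standard fact about simple Bratteli diagrams for granted.
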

   
   Although we do not need it immediately,
   we record the following useful result.
   
\begin{prop}
\label{groupoids:60}
A probability measure $\mu$ on $X_{E}$ is 
 $R$-invariant if and only if it is  $R_{E}$-invariant. 
\end{prop}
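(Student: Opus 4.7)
The forward direction is immediate since $R_E$ is an open subequivalence relation of $R$ by Theorem \ref{groupoids:50}. For the converse, assume $\mu$ is $R_E$-invariant. My plan is to verify the defining identity $\mu(s(\gamma))=\mu(r(\gamma))$ directly on the generating basis $\mathcal{B}$ of Theorem \ref{groupoids:50}, and then extend to all Borel bisections by standard arguments using closure of $\mathcal{B}$ under finite intersection (condition (\ref{groupoids:5-10}) of Theorem \ref{groupoids:5}) together with outer regularity of $\mu$.

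On $\mathcal{B}$ there are three cases. For $\gamma=\beta_{p,q}$ with $(p,q)\in I_n$ the set $\gamma$ lies in $R_E$ and the identity is part of the hypothesis. Since $\delta^{0,1}_{q,p}=(\delta^{1,0}_{p,q})^{-1}$, it remains to handle $\gamma=\delta^{1,0}_{p,q}$ with $(p,q)\in I_n^W$; write $t(p)=t(q)=\xi(w)$ for the unique $w\in W_n$. Then
\[
s(\delta^{1,0}_{p,q})=U^1(p)=\bigsqcup_{\substack{f\in F_{n+1}\\ i(f)=w}} U(p\xi^1(f)), \qquad r(\delta^{1,0}_{p,q})=U^0(q)=\bigsqcup_{\substack{f\in F_{n+1}\\ i(f)=w}} U(q\xi^0(f)),
\]
both disjoint unions indexed by the same set. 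For each admissible $f$ we have $t(p\xi^1(f))=\xi(t(f))=t(q\xi^0(f))$, so $(p\xi^1(f),q\xi^0(f))\in I_{n+1}$ and therefore $\beta_{p\xi^1(f),q\xi^0(f)}\subseteq R_E$. Applying $R_E$-invariance to each such $\beta$ gives $\mu(U(p\xi^1(f)))=\mu(U(q\xi^0(f)))$, and summing over $f$ yields $\mu(U^1(p))=\mu(U^0(q))$, as required.

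The main conceptual obstacle is really just spotting the right pairing: the set $\delta^{1,0}_{p,q}$ itself lies mostly outside $R_E$, but the clopen sets $U^1(p)$ and $U^0(q)$ at its two endpoints both partition into cells that are $R_E$-related in natural pairs, so no excursion outside $R_E$ is needed to compare their measures. Once this is recognized, which is essentially the measure-theoretic shadow of Lemma \ref{groupoids:27}, the proof is a direct summation. The extension from $\mathcal{B}$ to arbitrary Borel bisections of $R$ is routine given the structural properties of $\mathcal{B}$ noted above, so I do not foresee any serious difficulty there. I note in passing that Lemma \ref{groupoids:25} is consistent with and complementary to this computation, since it explains why the off-$R_E$ residue $\lambda^{1,0}_{p,q}$ of $\delta^{1,0}_{p,q}$ contributes nothing to $\mu$ on either endpoint.
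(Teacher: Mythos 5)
Your argument is correct, but it is genuinely different from the one in the paper. The paper's proof is a null-set argument: by Lemma \ref{groupoids:25} the sets $\xi^{0}(X_{F})$ and $\xi^{1}(X_{F})$ have measure zero for any $R_{E}$-invariant $\mu$, hence so does the $R_{E}$-saturation $Y$ of their union (a countable union of homeomorphic images of null sets under elements of $\mathcal{B}_{E}$), and since $R$ and $R_{E}$ coincide off $Y$, invariance transfers wholesale without ever touching the new generators. You instead verify the invariance identity generator by generator, and the pairing you identify --- $U^{1}(p)=\bigsqcup_{f}U(p\xi^{1}(f))$ matched cell-by-cell with $U^{0}(q)=\bigsqcup_{f}U(q\xi^{0}(f))$ via the $R_{E}$-bisections $\beta_{p\xi^{1}(f),q\xi^{0}(f)}$ --- is exactly right and is, as you say, the measure-theoretic shadow of Lemma \ref{groupoids:27}. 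What your route buys is independence from Lemma \ref{groupoids:25} and an explicit combinatorial reason the endpoint measures agree; what it costs is the extension step from $\mathcal{B}$ to arbitrary Borel bisections, which you leave as routine. It is indeed routine, but note that the closure property you actually need there is closure under composition with the $\beta_{p',p'}=id_{U(p')}$ (condition (\ref{groupoids:5-8}) of Theorem \ref{groupoids:5}, refined via Lemma \ref{groupoids:40}), so that every restriction of a generator to a clopen subset of its domain is again a finite union of generators; the endpoint identity on all such restrictions plus regularity then shows each generator is measure-preserving as a map, which is the statement one must feed into the countable-decomposition argument --- matching total endpoint measures of the unrestricted generators alone would not suffice. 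The paper's proof sidesteps this entirely, which is why it is shorter.
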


\begin{proof}
The 'only if' part is clear. Now suppose that $\mu$ is $R_{E}$-invariant.
If we let $Y$ be the set of all points which are $R_{E}$-equivalent to 
some point of $\xi^{0}(X_{F}) \cup \xi^{1}(X_{F})$, this Borel set 
has $\mu(Y)=0$. On the other hand, $R_{E}$ and $R$ agree on the 
complement of $Y$ and it follows that $\mu$ is $R$-invariant as well.
\end{proof}

To study $R$ in greater detail,  for each $n \geq 1$, we let $\mathcal{B}_{n}$
be the collection of all sets 
 $\beta_{p,q} \subseteq R$, with $(p, q)$ in $I_{n}$, along with all
$\delta^{1,0}_{p,q}$ and $\delta_{p,q}^{0,1}$, with $(p, q)$ in $I_{n}^{W}$.
We also define $R_{n}$ 
to be the union 
of these sets.

\begin{lemma}
\label{groupoids:70}
\begin{enumerate}
\item
For each $n \geq 1$, $R_{n}$, with the relative topology of $R$,
 is a compact, open, \'{e}tale equivalence relation.
 We let $[x]_{R_{n}}$ be the equivalence class of $x$ 
 in $R_{n}$, for any $x$ in $X_{E}$.
 \item 
 For any $x$ in $X_{E}$, we 
 have 
 \[
 \#  [x]_{n} = \left\{ \begin{array}{cl} 2 \# \{ p \in E_{0,n} \mid t(p) = t(x_{n}) \},  &  x_{n+1}  \in \xi^{0}(F_{n+1}) \cup \xi^{1}(F_{n+1}) \\
  \# \{ p \in E_{0,n} \mid t(p) = t(x_{n}) \},  &  x_{n+1}  \notin \xi^{0}(F_{n+1}) \cup \xi^{1}(F_{n+1}) \end{array} \right.
  \]
\item 
For $l > n$, we have  
$R_{n} - R_{l} $ is the union of 
\[
\beta_{p \xi^{1}(f) \xi^{0}(f'), q \xi^{0}(f) \xi^{1}(f') } \cup 
 \beta_{ q \xi^{0}(f) \xi^{1}(f'), p \xi^{1}(f) \xi^{0}(f'), } 
\]
over all $(p,q)$ in $I_{n}^{W}$ and 
$ (f, f') $ in $F_{n,l} \times F_{l+1}$ with $ t(f) = i(f') $.
\item 
For fixed $n \geq 1 $, the sets 
$R_{n} - R_{l}, l > n,$ are pairwise disjoint.
\item 
For all $n \geq 1$, $R_{n} \subseteq R_{n+1} \cup R_{n+2}$. 
\item 
The union of $R_{n}, n \geq 1$, is $R$.
\end{enumerate}
\end{lemma}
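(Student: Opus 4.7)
The plan is to take the six parts in order, with parts (3)--(5) forming a linked unit. Part (1) is immediate: each $R_n$ is a finite union of the compact-and-open basic sets $\beta_{p,q}$ with $(p,q)\in I_n$ and $\delta^{i,1-i}_{p,q}$ with $(p,q)\in I_n^W$, hence compact and open; and the case analysis carried out in the proof of Theorem \ref{groupoids:50}, specialized to $n=m$, shows that $\mathcal{B}_n$ is closed under inversion, intersection, and composition, which makes $R_n$ a subequivalence relation of $R$ in the relative topology. For part (2), I enumerate the $R_n$-partners of a point $x$: via the $\beta$'s, one gets one partner for each $p\in E_{0,n}$ with $t(p)=t(x_n)$, by replacing $x_1\cdots x_n$ with $p$ and keeping the tail. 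Via the $\delta$'s, an additional partner for each such $p$ appears precisely when $x_{n+1}\in \xi^0(F_{n+1})\cup \xi^1(F_{n+1})$ (which automatically forces $t(x_n)\in\xi(W_n)$, since $\xi$ is a graph embedding); each such new partner has $(n+1)$-th edge in the opposite $\xi^{1-i}(F_{n+1})$ and is therefore distinct from every $\beta$-partner. This doubles the count in exactly the stated case.

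Part (3) is the main obstacle, and parts (4)--(5) depend on it. The strategy is to iterate Lemma \ref{groupoids:40} on each $\delta^{1,0}_{p,q}$ with $(p,q)\in I_n^W$ a total of $l-n$ times. At each iteration a surviving $\delta$ at level $k$ decomposes disjointly into $\delta$'s at level $k+1$ together with two kinds of beta pieces of length $k+2$: the ``swap'' type $\beta_{\cdots\xi^1(f)\xi^0(f'),\,\cdots\xi^0(f)\xi^1(f')}$ and the ``common non-$\xi$'' type $\beta_{\cdots\xi^1(f)e,\,\cdots\xi^0(f)e}$ with $e\notin \xi^0(F)\cup\xi^1(F)$. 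After $l-n$ iterations the surviving $\delta$'s sit at level $l$ and therefore lie in $R_l$. The beta pieces produced at iterations $k<l-n$ have length at most $l$ and, by repeated application of part (6) of Lemma \ref{groupoids:10}, refine to unions of basic sets in $\mathcal{B}_l$, so also lie in $R_l$. The common-non-$\xi$ betas from the final iteration have length $l+1$ but agree at position $l+1$, so they sit inside the element $\beta_{p\xi^1(f),\,q\xi^0(f)}$ of $\mathcal{B}_l$, and again lie in $R_l$. What remains outside $R_l$ is the union of swap-type betas from the final iteration, which are exactly the sets named in the statement; the analogous decomposition of the $\delta^{0,1}$'s produces the reverse-oriented betas. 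The care needed here is in tracking the three piece-types across several levels and verifying the disjointness assertions.

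Parts (4)--(6) then follow cleanly. For (4), the largest index $i$ with $x_i\neq y_i$ equals $l+1$ on any pair coming from the decomposition of $R_n-R_l$, so varying $l$ yields disjoint sets. For (5), apply (3) with $l=n+1$: the swap-betas then have length $n+2$, and since $t(\xi^0(f'))=\xi(t(f'))=t(\xi^1(f'))$ their index pairs lie in $I_{n+2}$, placing them in $\mathcal{B}_{n+2}$ and hence inside $R_{n+2}$; this gives $R_n\subseteq R_{n+1}\cup R_{n+2}$. Part (6) is immediate because every basic set in $\mathcal{B}$ is indexed by a pair at some finite level $n$ and so is a member of $\mathcal{B}_n$, hence a subset of $R_n$.
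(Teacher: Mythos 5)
Your proposal is correct and follows essentially the same route as the paper: the paper likewise gets part (1) from the fact that $\mathcal{B}_{n}$ is a finite collection of compact open basic sets (though it establishes the equivalence-relation property by directly describing the $R_{n}$-classes, which then gives part (2) immediately), proves part (3) by applying Lemma \ref{groupoids:40} repeatedly until the surviving $\delta$'s reach level $l$ and identifying the leftover swap-type $\beta$'s at position $l+1$, deduces part (4) from the resulting description, and gets parts (5) and (6) exactly as you do. The only caveat is that in part (1) the composition $\delta^{1,0}_{p,q}\circ\delta^{0,1}_{p,q}=id_{U^{1}(p)}$ is not literally an element of $\mathcal{B}_{n}$ but only a subset of $\beta_{p,p}$, so strictly speaking $\mathcal{B}_{n}$ is closed under composition only up to containment in $R_{n}$ --- which is all that is needed.
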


\begin{proof}
Each of the sets in $\mathcal{B}$ is compact in the topology
of $X \times X$ and hence also in $R$. They are also
open in $R$ since they are part of the basis for its topology.
 Thus, $R_{n}$ is the union of a finite number of compact open sets 
 and is then compact and open.
 
 Let $x$ be any point in 
 $X$. If $x_{n+1}$ is not in $\xi^{0}(F_{n+1}) \cup \xi^{1}(F_{n+1})$, then 
 its images under the elements of $\mathcal{B}_{n}$ is exactly the set
 of all $y$ such that $y_{i} = x_{i}$, for all $i > n$.
  If $x_{n+1} = \xi^{0}(f_{n+1}) $, for some $f_{n+1}$ in $F_{n+1}$,
   then 
 its images under the elements of $\mathcal{B}_{n}$  is exactly the set
 of all $y$ such that $y_{i} = x_{i}$, for all $i > n$, and all $z$ with 
 $z_{n+1} = \xi^{1}(f_{n+1})$ and 
 $z_{i} = x_{i}$, for all $i > n+1$. 
 Similarly,  if $x_{n+1} = \xi^{1}(f_{n+1}) $, for some $f_{n+1}$ in $F_{n+1}$,
   then 
 its images under the elements of $\mathcal{B}_{n}$  is exactly the set
 of all $y$ such that $y_{i} = x_{i}$, for all $i > n$, and all $z$ with 
 $z_{n+1} = \xi^{0}(f_{n+1})$ and 
 $z_{i} = x_{i}$, for all $i > n+1$. 
 From this, it is clear that $R_{n}$ is an equivalence relation.

The second part follows immediately from the 
description of the equivalence classes we have just provided.

For the third part, we consider an element of $\mathcal{B}_{n}$. 
If it is $\beta_{p,q}$ for some $(p,q)$ in $I_{n}$, then by 
part 5 of Lemma \ref{groupoids:10}, it is contained in $R_{l}$
and so $\beta_{p,q} - R_{l} $ is empty.
 For $\delta^{1,0}_{p,q}$ with $(p,q)$ in $I_{n}^{W}$, we apply 
 Lemma \ref{groupoids:40} $l-n$ times. In the end, we have 
 a collection of $\delta^{1,0}_{p',q'}$ where $(p',q')$ is in 
 $I_{l}^{W}$, which are all in $R_{l}$. We also have many 
 $\beta_{p',q'}$. Almost all of these are contained in 
 $R_{l}$, with  the only  exception being those listed in the 
 statement where the $(p',q')$ are in $I_{l+1}^{W}$. 
 The reverse containment is clear.

 From the description of  $R_{n} - R_{l}$ which is given 
 in the third part, we see that any pair $(x,y)$ in $R_{n} - R_{l}$ 
 has $x_{l+1} \neq y_{l+1}$, but $x_{i} = y_{i}$, for all 
 $i > l+1$. This implies the fourth part.
 
 The fifth part follows from Lemma \ref{groupoids:40}.
 The last part is immediate from the definitions.
\end{proof}

The last two parts of this result are rather curious: if 
the $R_{n}$ actually formed an increasing sequence of compact, 
open subequivalence relations, then $C^{*}(R)$ would be an 
AF-algebra. The sets $R_{n} \cup R_{n+1}$ \emph{do} form an 
increasing sequence of compact open subsets, but they are 
not subequivalence relations.

\begin{thm}
\label{groupoids:80}
The \'{e}tale equivalence relation $R$ is amenable.
\end{thm}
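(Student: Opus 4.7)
The plan is to verify condition (ii) of Proposition 2.2.13 of \cite{ADR} by constructing functions $g_{l} \colon R \to [0, \infty)$ from the compact open subequivalence relations $R_{k}$ of Lemma \ref{groupoids:70}. For each $k \geq 1$, set
\[
f_{k}(x,y) = \frac{1}{|[x]_{R_{k}}|} \cdot \mathbf{1}_{R_{k}}(x,y).
\]
Each $f_{k}$ is non-negative, continuous (by local constancy of $x \mapsto |[x]_{R_{k}}|$, which takes only two values according to part 2 of Lemma \ref{groupoids:70}), compactly supported (since $R_{k}$ is compact, part 1 of Lemma \ref{groupoids:70}), and satisfies $\sum_{y} f_{k}(x,y) = 1$. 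For $(x,y) \in R_{k}$ the invariance sum $\sum_{z} |f_{k}(x,z) - f_{k}(y,z)|$ is zero, while for $(x,y) \notin R_{k}$ it equals $2$.

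Because the $R_{k}$ are not nested (part 3 of Lemma \ref{groupoids:70}), the naive choice $g_{l} = f_{l}$ fails to give uniform convergence on compact subsets of $R$.  I would instead take the Ces\`aro-type average
\[
g_{l} = \frac{1}{N(l)} \sum_{k=l}^{l+N(l)-1} f_{k}
\]
with $N(l) \to \infty$. This preserves the normalization $\sum_{y} g_{l}(x,y) = 1$, as well as continuity, non-negativity, and compact support. The triangle inequality gives
\[
\sum_{z} |g_{l}(x,z) - g_{l}(y,z)| \leq \frac{2\, B_{l}(x,y)}{N(l)},
\]
where $B_{l}(x,y)$ counts the values of $k \in [l, l+N(l)-1]$ for which $(x,y) \notin R_{k}$.

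The principal obstacle is to establish a uniform bound $B_{l}(x,y) \leq 1$, so that the right-hand side tends to zero uniformly in $(x,y)$. This requires a case analysis based on Lemma \ref{groupoids:40}: if $(x,y) \in \beta_{p,q}$ at some level $n$, then $(x,y) \in R_{m}$ for every $m \geq n$; if $(x,y) \in \delta^{1,0}_{p,q}$ at level $n$, then either $(x,y)$ is of $\lambda$-type (and hence belongs to $R_{m}$ for all $m \geq n$), or there is a unique ``swap level'' $k$ outside of which the pair lies in every $R_{m}$ (via a lower $\delta$ for $n \leq m < k$ by repeated application of Lemma \ref{groupoids:40}, and via a $\beta$ at level $k+1$ or above for $m > k$). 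With this uniform bound, the invariance sum is at most $2/N(l) \to 0$ on any compact subset, completing the verification of amenability.
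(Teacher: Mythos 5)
Your proposal is correct and follows essentially the same route as the paper: the same functions $f_{k}=h_{k}$, the same computation that the invariance sum is $0$ or $2$ according to membership in $R_{k}$, and the same key fact that a fixed pair in $R_{n}$ fails to lie in $R_{k}$ for at most one $k>n$ (which the paper extracts directly from the pairwise disjointness of the sets $R_{n}-R_{k}$ in part 4 of Lemma \ref{groupoids:70}, rather than re-deriving it from Lemma \ref{groupoids:40} as you sketch). The only cosmetic difference is your averaging window $[l,l+N(l)-1]$ versus the paper's average of $h_{1},\dots,h_{l}$; both yield the uniform bound $O(1/N(l))$ on each $R_{n}$.
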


\begin{proof}
For $ l \geq 1$, we define $h_{l}$ as follows:
\[
h_{l}(x,y) = \left\{ \begin{array}{cc} 
( \# [x]_{R_{l} })^{-1}, & (x,y) \in R_{l}  \\
 0, & (x,y) \notin R_{l} \end{array} \right.
 \]
 This is a continuous function on the space $R$ since $R_{l}$ is 
  \'{e}tale. It is compactly supported as $R_{l}$ is compact 
and it is clearly non-negative. We consider the two conditions 
given in the first section for amenability. The first of these 
 conditions is trivially satisfied.

 We consider $h_{l}$ on $R_{n}$, for $l > n$.
 Let $(x,y)$ be in $R_{n}$.
  If $(x,y)$ actually lies in $R_{l}$ as well, then we have 
  \[
  h_{l}(x,z) = h_{l}(y,z) = ( \# [x]_{R_{l} })^{-1}
  \]
  for every $(x,z)$ in $R_{l}$. 
  Now suppose that 
   $(x,y)$ is not in $R_{l}$. Let $z$ be any point with 
   $(x,z)$ in $R$. If neither $(x,z)$ nor $(y,z)$ are in $R_{l}$, 
   then 
   \[
   h_{l}(x,z) - h_{l}(y,z) = 0 - 0 =0.
   \]
    If $z$ is such that one of 
   $(x,z)$ and $(y,z)$ is in $R_{l}$, then as $(x,y)$ is not in $R_{l}$,
    $(x,z)$ and $(y,z)$ cannot both be 
  so exactly one of them is and we have 
  \begin{eqnarray*}
  \sum_{(x,z) \in R} \vert h_{l}(x,z) - h_{l}(y,z) \vert 
   & = &  \sum_{(x,z) \in R_{l}} \vert h_{l}(x,z) - h_{l}(y,z) \vert \\
      &  &  +  \sum_{(y,z) \in R_{l} } \vert h_{l}(x,z) - h_{l}(y,z) \vert \\
      &  = &   \sum_{(x,z) \in R_{l}} \vert (\# [x]_{R_{l}})^{-1} -0  \vert \\
    &  &  +  \sum_{(y,z) \in R_{l} } \vert 0 -  (\# [y]_{R_{l}})^{-1} \vert \\
    & = & 2.
    \end{eqnarray*}
 
 Now we define $g_{l} = l^{-1} \sum_{i=1}^{l} h_{l}$. This is clearly
 non-negative, continuous, compactly supported and satisfies the first of the 
 two conditions. As for the second, we consider the value 
 of $g_{l}$ on $R_{n}$:
 \begin{eqnarray*}
  \sum_{(x,z) \in R} \vert g_{l}(x,z) - g_{l}(y,z) \vert 
   & \leq & l^{-1} \sum_{i=1}^{n} \sum_{(x,z) \in R} 
   \vert h_{l}(x,z) - h_{l}(y,z) \vert \\ 
   &  &  + l^{-1} \sum_{i=n+1}^{l} \sum_{(x,z) \in R} 
   \vert h_{l}(x,z) - h_{l}(y,z) \vert  \\
   & \leq &  l^{-1} 2 n  + l^{-1}  2 \sum_{i=n+1}^{l} \chi_{R_{n} - R_{i}} \\
    & \leq & l^{-1} 2 (n+1)
\end{eqnarray*}
where we have used the fact that the sets $R_{n} - R_{i}, n < i \leq l,$ are pairwise disjoint from part 4 of Lemma 
 \ref{groupoids:70} in the last step. This clearly tends to 
zero uniformly on $R_{n}$ as $l$ tends to infinity. This completes the proof, using the characterization of amenability given 
in the introduction.
\end{proof}

\begin{rem}
\label{groupoids:90}
We finish this section with an example and a question. Let $\alpha, \beta$ and $\beta'$ be 
irrational numbers so that 
$\Z + \Z \alpha, \Z + \Z \beta, \Z + \Z \beta'$ (with the relative order from the real numbers)
are 
all simple dimension groups.  Let $(V, E), (W, F)$ and $(W', F')$ be Bratteli diagrams for
each with $V_{n}, W_{n}, W'_{n}$ each having two vertices (see \cite{Eff}). After
 replacing $(V, E)$ with some telescope, the diagrams $(W,F)$ and $(W',F')$ can be embedded into $(V, E)$ as above
 and let $R_{\alpha, \beta}$ and $R_{\alpha, \beta'}$ be the 
 \'{e}tale 
 equivalence relations constructed above.
 If we accept Theorem \ref{intro:10} for the moment, in view of the classification theorem and the fact that the 
 irrational rotation $C^{*}$-algebra, $A_{\alpha}$, is also classifiable \cite{ElEv}, 
 we have 
 \[
 C^{*}(R_{\alpha, \beta}) \cong C^{*}(R_{\alpha, \beta'}) \cong A_{\alpha}.
 \]
 I do not know of the groupoids   $R_{\alpha, \beta}$ and $R_{\alpha, \beta'}$
 are isomorphic (as in Definition 2.4 of \cite{GPS1}).
\end{rem}

\section{$K$-theory}
\label{K}

In this section, our single goal is to compute the $K$-theory
of the   $C^{*}$-algebra $C^{*}(R)$. The main tool will be Theorem 2.4 of
\cite{Put3}. Translating the notation from \cite{Put3}, the groupoid $G$ there will 
be our $R$. It will be useful to have some notation 
for the elements of $C^{*}(R)$. As in the last section, For each pair 
$(p,q)$ in
 $I_{n}$, we let $b_{p,q}$ be the 
 charactersitic function of $\beta_{p,q}$. If 
 $(p,q)$ is in
 $I_{n}^{W}$ and $i = 0,1$, we let $d_{p,q}^{i,1-i}$ be the charactersitic 
 function of $\delta_{p,q}^{i,1-i}$.
 
 The $C^{*}$-subalgebra of $C^{*}(R)$ generated by the $b_{p,q}$ 
 is  the AF-algebra $C^{*}(R_{E})$.

Let discuss  relative $K$-groups for $C^{*}$-algebras. If $A$ is a 
$C^{*}$-algebra and $A'$ is a $C^{*}$-subalgebra, the mapping cone 
of the inclusion is 
\[
C(A';A) = \{ f: [0,1] \rightarrow A \mid f \text{ continuous}, 
f(0) = 0, f(1) \in A' \}
\]
which is a $C^{*}$-algebra with point-wise operations.
The relative $K$-theory $K_{i}(A';A)$ is usually defined
as $K_{i+1}(C(A';A)) $, for $i = 0,1$. As there is an exact sequence
\[
0 \rightarrow C_{0}(0,1) \otimes A \rightarrow C(A';A) \rightarrow A' \rightarrow 0
\]
and using the canoncial isomorphism 
$K_{i}( C_{0}(0,1) \otimes A) \cong K_{i+1}(A)$, 
there is a six-term exact sequence
\vspace{.3cm}

\xymatrix{ K_{0}(A') \ar[r] & K_{0}(A) \ar[r] & K_{0}(A';A) \ar[d]  \\
  K_{1}(A';A)\ar[u] & K_{1}(A) \ar[l] &  K_{1}(A')\ar[l] }
\vspace{.3cm}

In \cite{Put2}, a different description of the relative 
group $K_{1}(A';A)$ 
is given. We let $\tilde{A}$ denote the $C^{*}$-algebra 
obtained by adding a unit to 
$A$ and $\tilde{A'} \subseteq \tilde{A}$ is the unital inclusion.
We also let $M_{n}(A)$ denote the 
$n \times n$-matrices 
with entries from $A$. One considers partial 
isometries $v$ in $M_{n}(\tilde{A})$, 
such that $v^{*}v$ is in $M_{n}(\C)$ (the subalgebra generated by 
the unit)
and $v v^{*}$ is in  $M_{n}(\tilde{A'})$. Following \cite{Put2}, 
we let $V_{n}(A';A)$ denote the set of all such elements $v$.
 The relative 
group $K_{1}(A';A)$ is described in terms of equivalence classes of such 
partial isometries, which we denote by $[v]_{r}$ ($r$ for 'relative'). 

One obvious advantage of this description is that two of the maps in the 
six-term exact sequence shown above
 become rather obvious: the one from 
$K_{1}(A)$ into $K_{1}(A';A)$ simply takes $[u]_{1}$, where
$u$ is a 
 unitary in 
$M_{n}(\tilde{A})$, to  $[u]_{r}$ in $K_{1}(A';A)$ 
and the one 
from $K_{1}(A';A)$ to $K_{0}(A')$ takes  $[v]_{r}$ to 
$[vv^{*}]_{0} - [v^{*}v]_{0}$ in $K_{0}(A')$.

We would like to apply the results of \cite{Put3} to our $C^{*}$-algebras
\newline
$C^{*}(R_{E}) \subseteq C^{*}(R)$.
To do so, we must  define the set 
$L \subseteq R$ from page 1492 of \cite{Put3}. 
We let $L$ be the union of all sets $\lambda^{1,0}_{p,q}$, with $(p,q)$  in 
$I_{n}^{W}$  and $n \geq 1$.

Observe that every 
point in $s(\lambda^{1,0}_{p,q})$ has all but finitely many edges 
in $\xi^{1}(F)$, while   every 
point in $r(\lambda^{1,0}_{p,q})$ has all but finitely many edges 
in $\xi^{0}(F)$. It follows that $r(L) \cap s(L)$ is empty.
 It is clear then that 
$L^{-1}$ is simply the union of all $\lambda^{0,1}_{p,q}$.

We know from 
Lemma \ref{groupoids:30} that, for any such $p,q$, we have
 $\delta^{1,0}_{p,q} - R_{E} = \lambda^{1,0}_{p,q}$. 
 Also, as $R_{E}$ is the union of all
 $\beta_{p,q}$, we know that $\beta_{p,q} - R_{E}$ is empty. 
 It follows then that the 
 complement of $R_{E}$ in $R$ is exactly $L \cup L^{-1}$.
 As the equivalence relation $R_{E}$ is  open in $R$, it
  follows that $L$ is closed.

 Following \cite{Put3}, we let 
 \[
 H_{0} = L^{-1}L , H_{1} =  LL^{-1}, H' = H_{0} \cup H_{1}, 
 H = H_{0} \cup H_{1} \cup L \cup L^{-1}.
 \]
 This defines these objects simply as subgroupoids. 
 They are not locally compact in general. The main 
 point of \cite{Put3} is that each can be endowed 
 with natural new topology which is finer 
 than the relative topology from $R$ and is \'{e}tale. 
  
 In view of Lemma \ref{groupoids:30},
  we see that $\lambda^{1,0}_{p,q} = \delta^{1,0}_{p,q} \cap L$
 and this means that the sets $\lambda^{1,0}_{p,q}$ are all clopen in the 
 relative topology of $L$. It follows that, for $(p,q)$ in $I_{n}^{W}$, $n \geq 1$,
  the sets
 \[
 \alpha^{0,0}_{p,q} = \lambda_{p,p}^{0,1} \circ \lambda_{p,q}^{1,0} = \lambda_{p,q}^{0,1} \circ \lambda^{1,0}_{q,q}
 \]
  are compact and 
 open and a base for the topology of  $H_{0}$.
 We also define 
  \[
 \alpha^{1,1}_{p,q} = \lambda_{p,p}^{1,0} \circ \lambda_{p,q}^{0,1} = \lambda_{p,q}^{1,0} \circ \lambda^{0,1}_{q,q}
 \]
 which  are compact and 
 open and a base for the topology of  $H_{1}$. The set $L \cup L^{-1}$ keeps its 
 topology from $R$ and $H$ is given the disjoint union topology.

  For each $(p,q)$ in $I_{n}^{W}$,
  we define $a^{i,i}_{p,q}$ to be the characteristic function
 of $\alpha^{i,i}_{p,q}$ which is in $C^{*}(H_{i}) \subseteq C^{*}(H)$.
 The span of these elements 
 is dense in $C^{*}(H_{i})$.
 We also let $a^{1,0}_{p,q}$ be the characteristic function of 
 $\lambda^{1,0}_{p,q}$ and $a^{0,1}_{p,q}$ be the characteristic function of 
 $\lambda_{p,q}^{0,1}$, which are both in $C^{*}(H)$.

 The following is essentially the same as Lemma 3.8 of 
 \cite{Put3} and so we do not provide a proof. On the 
 other hand, it is a simple 
 matter to see that the $a^{i,j}_{p,q}$ form a systems
  of matrix units 
 for the various finite-dimensional algebras and this 
 can be proved directly using
 arguments  following Definition \ref{groupoids:20}.
 It does seem worthwhile to point out that for 
 any $n \geq 1$, $(p,q)$ in $I_{n}^{W}$, $i,j=0,1$
 and $m > n$, we have 
 \[
 a_{p,q}^{i,j} = \sum_{p' \in F_{n,m}, \xi(i(p')) = t(p)}
  a_{p\xi^{i}(p'),q\xi^{j}(p')}^{i,j}, 
  \] 
  since this is a slightly different relation than 
  satisfied by the $b_{p,q}$.

 \begin{thm}
 \label{K:10}
 \begin{enumerate}
 \item The $C^{*}$-algebras $C^{*}(H_{0}), C^{*}(H_{1})$ and 
 $C^{*}(H)$ are all AF-algebras.
 \item We have 
 \[
 K_{0}(C^{*}(H_{0})) \cong  K_{0}(C^{*}(H)) \cong K_{0}(C^{*}(H_{1}))
  \cong G_{1}
  \]
  and the first two isomorphism are induced by the inclusion
  maps.
  \item The map from $ K_{0}(C^{*}(H_{0}) )$ to 
  $K_{1}(C^{*}(H'); C^{*}(H)) $ which sends $[ a^{0,0}_{p,p}]_{0}$ to 
  $\left[ \begin{array}{cc} a^{1,0}_{p,p} & 0 \\ 1 - a^{0,0}_{p,p} & 0 
  \end{array}
 \right]_{r}$
   is an isomorphism.
  \end{enumerate}
  \end{thm}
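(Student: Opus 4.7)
The plan is to exhibit $C^*(H_0)$, $C^*(H_1)$, and $C^*(H)$ as AF-algebras via explicit Bratteli diagrams built from the matrix-unit relations satisfied by the $a^{i,j}_{p,q}$. For each $n \geq 1$ and $w \in W_n$, set $k_n(w) = \#\{p \in E_{0,n} : t(p) = \xi(w)\}$. Arguing as in Lemma~3.8 of \cite{Put3}, one checks that the $a^{i,j}_{p,q}$ with $(p,q) \in I_n^W$, $t(p)=t(q)=\xi(w)$, and $i,j \in \{0,1\}$ form a system of matrix units for $M_{2k_n(w)}(\C)$ inside $C^*(H)$, while the subsystem with $i = j = 0$ (respectively $i = j = 1$) spans $M_{k_n(w)}(\C)$ inside $C^*(H_0)$ (respectively $C^*(H_1)$). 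The decomposition formula displayed just before the theorem shows that these subalgebras nest consistently from level $n$ to level $n+1$ with multiplicity $\#\{f \in F_{n+1} : i(f) = w,\, t(f) = w'\}$, and density follows because the $\alpha^{i,j}_{p,q}$ form a compact-open basis for the relevant topologies. This yields part~(1). Since $K_0$ of an AF-algebra depends only on the Bratteli multiplicities, which coincide with those of $(W,F)$ in all three cases, each of the three $K_0$-groups is naturally identified with $G_1$. The inclusion $C^*(H_i) \hookrightarrow C^*(H)$ embeds each $M_{k_n(w)}$-block as an $(i,i)$-corner of the corresponding $M_{2k_n(w)}$-block and hence induces the identity on $K_0$, giving part~(2).

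For part~(3), apply the six-term exact sequence to $C^*(H') \subseteq C^*(H)$, where $C^*(H') = C^*(H_0) \oplus C^*(H_1)$. Since all three algebras are AF, their $K_1$-groups vanish, and the sequence collapses to
\[
0 \to K_1(C^*(H');C^*(H)) \to K_0(C^*(H')) \to K_0(C^*(H)).
\]
Under the identifications $K_0(C^*(H')) \cong G_1 \oplus G_1$ and $K_0(C^*(H)) \cong G_1$ from part~(2), the right-hand map becomes $(a,b) \mapsto a+b$: both summand inclusions induce the identity on $K_0$, and the rank-one projections $a^{0,0}_{p,p}$ and $a^{1,1}_{p,p}$ represent the same class in $K_0(C^*(H))$ since they lie in the same $M_{2k_n(w)}(\C)$-block. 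Hence $K_1(C^*(H');C^*(H)) \cong \ker(+) = \{(a,-a) : a \in G_1\} \cong G_1$.

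It remains to verify that $v = \left(\begin{smallmatrix} a^{1,0}_{p,p} & 0 \\ 1 - a^{0,0}_{p,p} & 0 \end{smallmatrix}\right)$ realizes this isomorphism. Using $(a^{1,0}_{p,p})^* a^{1,0}_{p,p} = a^{0,0}_{p,p}$, $a^{1,0}_{p,p}(a^{1,0}_{p,p})^* = a^{1,1}_{p,p}$, and $a^{1,0}_{p,p} a^{0,0}_{p,p} = a^{1,0}_{p,p}$, a direct computation gives $v^*v = \mathrm{diag}(1,0) \in M_2(\C)$ and $vv^* = \mathrm{diag}(a^{1,1}_{p,p},\,1 - a^{0,0}_{p,p}) \in M_2(\widetilde{C^*(H')})$, confirming $v \in V_2(C^*(H');C^*(H))$. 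The boundary map then sends $[v]_r$ to $[vv^*]_0 - [v^*v]_0 = [a^{1,1}_{p,p}]_0 - [a^{0,0}_{p,p}]_0$, which in $G_1 \oplus G_1$ is the kernel element $(-[a^{0,0}_{p,p}],\,[a^{0,0}_{p,p}])$. Under the isomorphism $\ker(+) \cong G_1$ via $(a,-a) \mapsto a$, this corresponds to $-[a^{0,0}_{p,p}]_0$, so the composition of the asserted map $K_0(C^*(H_0)) \to K_1(C^*(H');C^*(H))$ with the boundary and the kernel identification is $\pm\mathrm{id}_{G_1}$, proving that the asserted map is itself an isomorphism. The main obstacle is the bookkeeping: confirming the matrix-unit relations across the three different groupoid topologies on $H$, $H_0$, $H_1$, and then tracking the boundary's image through the various identifications with $G_1$ coming from the inclusion-induced $K_0$-maps.
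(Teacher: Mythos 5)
Your argument is correct, and for parts (1) and (2) it is exactly the route the paper gestures at: the paper gives no proof of this theorem, remarking only that it is ``essentially the same as Lemma 3.8 of \cite{Put3}'' and that the $a^{i,j}_{p,q}$ form systems of matrix units provable by the arguments following Definition 2.3. Your identification of the Bratteli data of $C^{*}(H_{0})$, $C^{*}(H_{1})$ and $C^{*}(H)$ with $(W,F)$ (blocks of size $k_{n}(w)$, resp.\ $2k_{n}(w)$, with multiplicities given by the edges of $F$, via the displayed refinement identity) is the intended content. Where you genuinely diverge is part (3): instead of importing the isomorphism from \cite{Put3}, you derive it internally from the six-term sequence for $C^{*}(H')\subseteq C^{*}(H)$, using $K_{1}=0$ for all three AF-algebras to identify $K_{1}(C^{*}(H');C^{*}(H))$ with $\ker\bigl(K_{0}(C^{*}(H'))\to K_{0}(C^{*}(H))\bigr)\cong\{(a,-a)\}\cong G_{1}$, and then computing $\partial[v]_{r}=[a^{1,1}_{p,p}]_{0}-[a^{0,0}_{p,p}]_{0}$ explicitly; since $\partial$ is injective here, this simultaneously establishes well-definedness and bijectivity of the asserted map. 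This buys a self-contained proof that also makes transparent why the specific partial isometry $\left(\begin{smallmatrix} a^{1,0}_{p,p} & 0 \\ 1-a^{0,0}_{p,p} & 0\end{smallmatrix}\right)$ appears (it is the essentially unique relative class with boundary $(-g,g)$), at the mild cost of a sign ambiguity that is harmless for the isomorphism claim; the matrix-unit identities you invoke ($(a^{1,0}_{p,p})^{*}a^{1,0}_{p,p}=a^{0,0}_{p,p}$, $a^{1,0}_{p,p}(a^{1,0}_{p,p})^{*}=a^{1,1}_{p,p}$, $a^{1,0}_{p,p}a^{0,0}_{p,p}=a^{1,0}_{p,p}$) all check out against the definitions of $\lambda^{i,1-i}_{p,q}$ and $\alpha^{i,i}_{p,q}$.
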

  
    The important relation between the $C^{*}(H)$ and 
    our earlier $C^{*}(R)$  is the fact that 
  elements of $C^{*}(R)$ act as multipliers of $C^{*}(H)$. 
  When considering  
  those elements which are continuous functions of compact support 
  on $R$, this is  a matter of realizing that
   $H \subseteq R$ and one simply restricts the functions. The
    topology which $H$ has been given is not the  relative topology
    from $R$, but it is finer  so that a continuous function
    on $R$ restricts to a continuous function on $H$. If it
    has compact support on $R$, its support will no longer 
    be compact on $H$, but the function will be bounded.
    
We need to perform this product in  a few simple cases.   
    
\begin{lemma}
\label{K:15}
Let $ m \geq 1$, $(p,p')$ be in $I_{m}$, $q$ in $E_{0,m}$ and $i,j = 0,1$.
We have 
\begin{enumerate}
\item 
\[
d_{p,p'}^{i,1-i}  a_{q,q}^{j,j} = \left\{ \begin{array}{cl}
 a^{i,1-i}_{p,p'} & \text{ if } 1-i=j, p'=q \\
  0 & \text{ otherwise } \end{array} \right.
\]
\item 
\[
a_{q,q}^{j,j} d_{p,p'}^{i,1-i}  = \left\{ \begin{array}{cl}
 a^{i,1-i}_{p,p'} & \text{ if } i=j, p=q \\
  0 & \text{ otherwise } \end{array} \right.
\]
\item 
\[
b_{p,p'}  a_{q,q}^{j,j} = \left\{ \begin{array}{cl}
 a^{j,j}_{p,p'} & \text{ if }  p'=q \\
  0 & \text{ otherwise } \end{array} \right.
\]
\item
\[
 a_{q,q}^{j,j} b_{p,p'}  = \left\{ \begin{array}{cl}
 a^{j,j}_{p,p'} & \text{ if }  p=q \\
  0 & \text{ otherwise } \end{array} \right.
\]
\end{enumerate}
\end{lemma}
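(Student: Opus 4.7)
The plan is to interpret each identity as a product of characteristic functions of compact open bisections in the appropriate groupoid and to compute these products via convolution. Each of $b_{p,p'}$, $d^{i,1-i}_{p,p'}$, and $a^{j,j}_{q,q}$ is $\chi_A$ for some bisection $A$, the last being the identity on a compact open subset of $X_E$. Because the bisection property forces all convolution sums to have at most one non-zero term, the product of any two of these characteristic functions is either zero or the characteristic function of the set-theoretic composition $A \circ B = \{(x,y) \mid (x,z) \in A,\ (z,y) \in B\}$.

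First I would record the relevant source and range data. By definition $s(\delta^{i,1-i}_{p,p'}) = U^i(p)$ and $r(\delta^{i,1-i}_{p,p'}) = U^{1-i}(p')$; similarly $s(\beta_{p,p'}) = U(p)$ and $r(\beta_{p,p'}) = U(p')$. A direct computation using $\alpha^{j,j}_{q,q} = \lambda^{j,1-j}_{q,q} \circ \lambda^{1-j,j}_{q,q}$ shows that $\alpha^{j,j}_{q,q}$ is the identity on the compact set $Y^j(q) \subseteq U^j(q)$ consisting of all paths which begin with $q$ and all of whose subsequent edges lie in $\xi^j(F)$.

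Each of the four products then reduces to determining when the relevant source and range overlap. For part~(1), the composition $\delta^{i,1-i}_{p,p'} \circ \alpha^{j,j}_{q,q}$ has non-empty support iff $U^{1-i}(p') \cap Y^j(q)$ is non-empty; since distinct initial paths give disjoint cylinders and since $U^{1-i}(q) \cap U^j(q) = \emptyset$ whenever $1-i \neq j$, this forces $p' = q$ and $j = 1-i$. In that case, restricting the second coordinate of $\delta^{i,1-i}_{p,p'}$ to $Y^{1-i}(p')$ picks out exactly the first clause in the three-case definition of $\delta^{i,1-i}_{p,p'}$, namely $\lambda^{i,1-i}_{p,p'}$, whose characteristic function is $a^{i,1-i}_{p,p'}$. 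Part~(2) is the mirror calculation using $s(\delta^{i,1-i}_{p,p'}) = U^i(p)$ and $r(\alpha^{j,j}_{q,q}) = Y^j(q)$, forcing $p = q$ and $i = j$. Parts~(3) and~(4) replace $\delta$ by $\beta_{p,p'}$: intersecting $U(p')$ or $U(p)$ with $Y^j(q)$ forces $p' = q$ or $p = q$ respectively, after which the restriction of $\beta_{p,p'}$ to the appropriate set of pairs coincides with $\alpha^{j,j}_{p,p'}$ on inspection. The main thing to watch throughout is the distinction between the open set $U^{1-i}(p')$ and its strictly smaller subset $Y^{1-i}(p')$, which is precisely what causes the three-case definition of $\delta^{i,1-i}_{p,p'}$ to collapse to its $\lambda$ piece in part~(1); beyond this, there is no substantive obstacle.
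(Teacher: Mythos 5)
Your proof is correct and follows essentially the same route as the paper: both reduce each product to the set-theoretic composition of the underlying bisections, with the only real content being that intersecting the range (resp.\ source) of $\delta^{i,1-i}_{p,p'}$ with the support of $\alpha^{j,j}_{q,q}$ collapses the three-case definition of $\delta^{i,1-i}_{p,p'}$ to its $\lambda^{i,1-i}_{p,p'}$ piece. The paper states this more tersely and obtains the remaining parts by taking adjoints, whereas you compute parts (3) and (4) directly, but the argument is the same.
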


\begin{proof}
The proof of the first part   involves computing the 
composition
\[
\delta_{p,p'}^{i,1-i} \circ \alpha^{j,j}_{q,q} 
= \delta_{p,p'}^{i, 1-i}  \circ \lambda_{q,q}^{j,1-j} 
\circ \lambda_{q,q}^{1-j,j}.
\]
It follows from the defintions that this is empty unless
$j=1-i$ and $p'=q$. In that case, it is simply
$\lambda_{p,p'}^{i,1-i}$. This completes the proof. 
The second part is done in a similar way and we omit the details.
The  others parts can be obtained from these
by taking adjoints.
\end{proof}

\begin{lemma}
\label{K:18}
For each $m \geq 1$, define
 \[
  e_{m} = \sum_{q \in E_{0,m}, t(q) \in \xi(W) } 
  a^{0,0}_{q,q} + a^{1,1}_{q,q},
  \]
  in $C^{*}(H_{0}) \oplus C^{*}(H_{1})$. 
  \begin{enumerate}
  \item 
  The sequence $e_{m}, m \geq 1$ is an approximate identity for
  $C^{*}(H)$.
  \item 
  If $n < m$ and $(p,q)$ is in $I_{n}^{W}$, then 
  \[
  b_{p,q} e_{m} = e_{m} b_{p,q},
  \]
  is in $C^{*}(H_{0}) \oplus C^{*}(H_{1})$.
  \item 
  If $n < m$ and $(p,p)$ is in $I_{n}^{W}$, then 
  \[
  d_{p,p}^{1,0} e_{m} = e_{m} d_{p,p}^{1,0} = a_{p,p}^{1,0} + c,
  \]
  where $c$ is a partial isometry in $C^{*}(H_{0}) \oplus C^{*}(H_{1})$. 
  Moreover, $a_{p,p}^{1,0}$ and $c$ have orthogonal ranges and orthogonal 
  sources; that is, $c^{*}a_{p,p}^{1,0} = 0 = a_{p,p}^{1,0} c^{*}$.
  \end{enumerate}
\end{lemma}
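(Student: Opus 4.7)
The plan is a direct computation using the matrix-unit structure of $C^*(H_0)$ and $C^*(H_1)$ together with the explicit decompositions already established in Lemmas \ref{groupoids:10}, \ref{groupoids:40} and \ref{K:15}.

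For part (1), each $e_m$ is a sum of pairwise orthogonal projections $a^{j,j}_{q,q}$ and hence a projection. The matrix-unit expansion identity $a^{i,j}_{p,q} = \sum_{p' \in F_{n,m},\, \xi(i(p'))=t(p)} a^{i,j}_{p\xi^i(p'),\, q\xi^j(p')}$ recorded just before Theorem \ref{K:10}, applied to each summand $a^{j,j}_{q,q}$ of $e_n$, expresses $e_n$ as a sub-sum of $e_m$; so the sequence is increasing. To see it acts as an identity on a dense subalgebra, I will check $e_m a^{i,j}_{p,q} = a^{i,j}_{p,q} = a^{i,j}_{p,q} e_m$ for every generator with $(p,q) \in I_n^W$ and $m > n$: the expansion at level $m$ of $a^{i,j}_{p,q}$, combined with the matrix-unit relations $a^{i,i}_{r,r}\cdot a^{i,j}_{p',q'} = \delta_{r,p'}\, a^{i,j}_{p',q'}$, yields the identity directly.

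For part (2), I first use part 6 of Lemma \ref{groupoids:10} to expand $b_{p,q} = \sum_{p' \in E_{n,m},\, i(p')=t(p)} b_{pp',qp'}$. Parts 3 and 4 of Lemma \ref{K:15} then give $b_{pp',qp'}\cdot a^{j,j}_{r,r} = \delta_{qp',r}\, a^{j,j}_{pp',qp'}$ and $a^{j,j}_{r,r}\cdot b_{pp',qp'} = \delta_{pp',r}\, a^{j,j}_{pp',qp'}$, so both $b_{p,q}\, e_m$ and $e_m\, b_{p,q}$ collapse to the common sum $\sum_{p'}\sum_{j=0,1} a^{j,j}_{pp',qp'}$, where $p'$ runs over the edges of $E_{n,m}$ starting at $t(p)$ and ending in $\xi(W)$. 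This lies in $C^*(H_0)\oplus C^*(H_1)$.

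For part (3), the key move is to iterate Lemma \ref{groupoids:40} exactly $m-n$ times, producing a disjoint decomposition of $\delta^{1,0}_{p,p}$ into (i) the $\delta^{1,0}$-pieces $\delta^{1,0}_{p\xi^1(g),\, p\xi^0(g)}$ for $g \in F_{n,m}$ with $i(g) = w$ (at level $m$, with indices in $I_m^W$), together with (ii) a finite collection of $\beta_{r_\alpha,s_\alpha}$-pieces at levels between $n+2$ and $m+1$. The crucial structural property of the $\beta$-pieces, preserved under every iteration, is that each $r_\alpha$ and $s_\alpha$ extends $p$ by a block of pure $\xi^1$- respectively pure $\xi^0$-edges followed by a single \emph{impure} internal edge (of the opposite $\xi$-parity or outside $\xi^0(F)\cup\xi^1(F)$). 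For the (i)-pieces, part 1 of Lemma \ref{K:15} gives $d^{1,0}_{p\xi^1(g),\, p\xi^0(g)}\, e_m = a^{1,0}_{p\xi^1(g),\, p\xi^0(g)}$; summing over $g$ and applying the matrix-unit expansion recovers exactly $a^{1,0}_{p,p}$. For the (ii)-pieces, the computation of part (2) (expanded to level $m$ where needed and applied directly when the level exceeds $m$) places $b_{r_\alpha,s_\alpha}\, e_m$ inside $C^*(H_0)\oplus C^*(H_1)$; their sum defines $c$. The parallel computation with $e_m\, d^{1,0}_{p,p}$ via part 2 of Lemma \ref{K:15} produces the same answer. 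Since the $\beta_{r_\alpha,s_\alpha}$ are pairwise disjoint, the reduced pieces $b_{r_\alpha,s_\alpha}\, e_m$ have pairwise orthogonal sources and pairwise orthogonal ranges, so $c$ is a partial isometry.

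The remaining orthogonality $c^*\, a^{1,0}_{p,p} = 0 = a^{1,0}_{p,p}\, c^*$ reduces to a disjointness-of-support check in $X_E$: the source and range of $a^{1,0}_{p,p}$ are the sets of infinite paths extending $p$ by a pure $\xi^1$- (resp.\ pure $\xi^0$-) tail, while the source and range of each reduced piece $b_{r_\alpha,s_\alpha}\, e_m$ consist of paths whose initial block is $r_\alpha$ or $s_\alpha$; the impure internal edge inside $r_\alpha$ (resp.\ $s_\alpha$) forces the latter to miss both pure-$\xi$-tail sets. The main obstacle I anticipate is the combinatorial bookkeeping in iterating Lemma \ref{groupoids:40}; however, the structural feature that supplies the orthogonality — namely, an impure internal edge in every $\beta$-piece — is already visible after a single step and is manifestly preserved under further iteration, so no essentially new work is needed beyond the base case.
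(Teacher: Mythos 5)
Your proposal is correct and follows essentially the same route as the paper: expand $b_{p,q}$ via part 6 of Lemma \ref{groupoids:10} and $\delta^{1,0}_{p,p}$ via iterated application of Lemma \ref{groupoids:40}, then multiply term-by-term using Lemma \ref{K:15} and the matrix-unit expansion of the $a^{i,j}_{p,q}$ to identify $a^{1,0}_{p,p}$ plus a remainder $c$ in $C^{*}(H_{0})\oplus C^{*}(H_{1})$. The only (harmless) divergence is that you verify the orthogonality of $c$ and $a^{1,0}_{p,p}$ by a disjointness-of-supports argument, whereas the paper deduces it from the fact that the two summands in the decomposition of $d^{1,0}_{p,p}$ already have orthogonal initial and final projections before multiplying by the commuting projection $e_{m}$.
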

 
\begin{proof}
The proof of the first part is straightforward and we omit it.
For the second part, we use part 5 of Lemma \ref{groupoids:10} 
to write  $b_{p,p'}$ as a sum of $b_{q,q'}$ over pairs $(q,q')$ 
in $I_{m}$. We also write $e_{m}$ as  a sum of $a_{q,q}^{i,i}$ as
 in its definition. We then expand out the product $b_{p,q} e_{m}$, applying 
part 2 of Lemma \ref{K:15} to each term. A similar approach to 
$e_{m} b_{p,q}$ yields the result.

  For the third part, 
   repeated application of Lemma \ref{groupoids:40} means that 
    we may write 
   \[
   d_{p,p}^{1,0} = \sum_{p' \in F_{n,m}, \xi(s(p')) = t(p)}
    d_{p\xi^{1}(p'),p\xi^{0}(p')}^{1,0} + b,
   \]
   where $b$ is a sum of terms of the form $\beta_{q',q''}$ with $(q',q'')$
   in $I_{m+1}$.  Moreover, these are two partial isometries with orthogonal
   initial projections and final projections.
   We now take the product with $e_{m}$, again writing it as 
   a sum of $a_{q,q}^{i,i}$ and expanding. By using Lemma \ref{K:15}, we
   we first see that $d_{p,p}^{1,0} e_{m} = e_{m} d_{p,p}^{1,0}$.
   For a fixed $p' \in F_{n,m}, \xi(s(p')) = t(p)$, we also have 
   \begin{eqnarray*}
   d_{p\xi^{1}(p'),p\xi^{0}(p')}^{1,0} e_{m} 
  & = & \sum_{q} d_{p\xi^{1}(p'),p\xi^{0}(p')}^{1,0} 
  (a^{0,0}_{q,q} +  a_{q,q}^{1,1}) \\
     & = & a_{p\xi^{1}(p'),p\xi^{0}(p')}^{1,0}, 
   \end{eqnarray*} 
   using Lemma \ref{K:15}. Taking the sum over all $p'$ now yields
   \[
  \left( \sum_{p' \in F_{n,m}, \xi(s(p')) = t(p)}
    d_{p\xi^{1}(p'),p\xi^{0}(p')}^{1,0} \right) e_{m} = a_{p,p}^{1,0}.
   \] 
   The term $b e_{m} = c$ lies in $C^{*}(H_{0}) \oplus C^{*}(H_{1})$ 
   in consequence of Lemma \ref{K:15}.
This completes the proof.   
\end{proof}

 \begin{prop}
 \label{K:20}
 With the isomorphism 
 \[
 \alpha: K_{1}( C^{*}(R_{E}); C^{*}(R)) \rightarrow K_{1}(C^{*}(H'); C^{*}(H))
 \]
 given in Theorem 3.1 of 
 \cite{Put2}, for any $n \geq 1$ and 
 $p$ in $E_{0,n}$ with $t(p)$ in $\xi(W)$, we have 
 \[
 \alpha\left[ \begin{array}{cc} d_{p,p}^{1,0} & 0 \\ 1 - (d_{p,p}^{1,0})^{*}d_{p,p}^{1,0} & 0 \end{array} \right]_{r}
  = \left[\begin{array}{cc} a_{p,p}^{1,0} & 0 \\ 1 - a_{p,p}^{0,0} & 0 \end{array} \right]_{r}.
 \]
 \end{prop}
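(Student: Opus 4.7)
My approach is to compute $\alpha([V]_r)$ directly, using the explicit description of the isomorphism given in \cite[Theorem 3.1]{Put2}, which realizes $\alpha$ via the multiplier action of $\widetilde{C^*(R)}$ on $C^*(H)$. The approximate identity $e_m$ of Lemma \ref{K:18} and the commutation/compression formulas in Lemmas \ref{K:15} and \ref{K:18} have been set up precisely to make this computation tractable.

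The first step is to verify that both matrices are legitimate representatives. For $V = \begin{pmatrix} d_{p,p}^{1,0} & 0 \\ 1 - (d_{p,p}^{1,0})^* d_{p,p}^{1,0} & 0 \end{pmatrix}$, a routine computation using that $d_{p,p}^{1,0}$ is a partial isometry gives $V^*V = \operatorname{diag}(1,0) \in M_2(\mathbb{C})$, and Lemma \ref{groupoids:27} gives $d_{p,p}^{1,0}(d_{p,p}^{1,0})^* = \sum_{f} b_{p\xi^1(f),p\xi^1(f)} \in C^*(R_E)$, so $V \in V_2(C^*(R_E);C^*(R))$. A parallel verification, using the easy composition identity $(a_{p,p}^{1,0})^* a_{p,p}^{1,0} = a_{p,p}^{0,0} \in C^*(H_0) \subseteq C^*(H')$ together with the analogous statement for $a_{p,p}^{1,0}(a_{p,p}^{1,0})^* = a_{p,p}^{1,1}$, shows that $W = \begin{pmatrix} a_{p,p}^{1,0} & 0 \\ 1 - a_{p,p}^{0,0} & 0 \end{pmatrix}$ lies in $V_2(C^*(H');C^*(H))$.

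Next I would apply $\alpha$ to $[V]_r$. Invoking the formula from \cite{Put2}, one forms the compression $e_m V$ (suitably interpreted in $M_2(\widetilde{C^*(H)})$). By Lemma \ref{K:18}(3), for $m$ large, $e_m d_{p,p}^{1,0} = a_{p,p}^{1,0} + c$ with $c \in C^*(H_0) \oplus C^*(H_1) = C^*(H')$ satisfying $c^* a_{p,p}^{1,0} = 0 = a_{p,p}^{1,0} c^*$. Combined with Lemma \ref{K:18}(2) applied to each summand of $(d_{p,p}^{1,0})^* d_{p,p}^{1,0}$, this expresses $\alpha([V]_r)$ as the class of a partial isometry whose ``essential'' content on $C^*(H) \setminus C^*(H')$ is exactly $a_{p,p}^{1,0}$, while the remaining $c$--contribution sits in $C^*(H')$.

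The main obstacle is then the final identification: showing that this compressed class coincides with $[W]_r$ rather than merely resembling it. Because $c$ lies in $C^*(H')$ and is orthogonal to $a_{p,p}^{1,0}$ on both source and range sides, I would exhibit an explicit homotopy of partial isometries in $V_2(C^*(H');C^*(H))$ that continuously scales $c$ to zero while adjusting the bottom-row entry $e_m(1 - (d_{p,p}^{1,0})^* d_{p,p}^{1,0})$ towards $1 - a_{p,p}^{0,0}$, keeping every intermediate matrix a valid element of $V_2(C^*(H');C^*(H))$ (source a scalar projection, range in $M_2(\widetilde{C^*(H')})$). The orthogonality in Lemma \ref{K:18}(3) makes this a matter of careful bookkeeping with partial isometries, and passing to the strict limit as $m \to \infty$ completes the identification with $[W]_r$.
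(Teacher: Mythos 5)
Your proposal is correct and follows essentially the same route as the paper: both apply the explicit formula for $\alpha$ from \cite{Put2} using the approximate unit $e_{m}$, and both reduce the computation to Lemma \ref{K:18}(3), which gives $d_{p,p}^{1,0}e_{m} = a_{p,p}^{1,0} + c$ with $c \in C^{*}(H_{0}) \oplus C^{*}(H_{1})$ orthogonal to $a_{p,p}^{1,0}$ on both sides. The only difference is the final cleanup: where you propose a homotopy of partial isometries that rotates $c$ away, the paper instead left-multiplies by two explicit self-adjoint unitaries in $M_{4}\left( \left( C^{*}(H_{0}) \oplus C^{*}(H_{1}) \right)^{\sim} \right)$ and invokes Lemma 2.2(iii) of \cite{Put2} --- an interchangeable and equally standard move, so the two arguments are essentially the same.
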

 
 \begin{proof}
 The map $\alpha$ is constructed as follows. One finds 
 an approximate unit, $e_{t}, t \in [0, \infty)$,  for the 
 algebra $C^{*}(H)$. Then for a partial isometry $v$ in $M_{n}(C^{*}(R)^{\sim})$, 
 such that $v^{*}v$ and $vv^{*}$ are in 
  $M_{n}(C^{*}(R_{E})^{\sim})$, we form 
  \[
  \alpha(v)_{t} = \left[ \begin{array}{cc} ve_{t} & 0 \\
  \left( v^{*}v - e_{t}v^{*}ve_{t} \right)^{1/2} & 0 \end{array} \right].
  \]
  Notice here that we write $e_{t}$ rather than 
  $1_{n} \otimes e_{t} \in M_{n}(C^{*}(H))$, 
  for simplicity.
  For sufficently large values of $t$, this defines a class 
  in  \newline
  $K_{1}(C^{*}(H_{0}) \oplus C^{*}(H_{1}); C^{*}(H))$ which is the image of
  $[v]_{r}$ under $\alpha$.

  We extend our approximate unit $e_{m}$ extend to real values 
  by setting $e_{t} = (m+1-t) e_{m} + (t-m) e_{m+1}$, for 
  $ m \leq t \leq m+1$.
  
It will be convenient to denote
\[
b = (d_{p,p}^{1,0})^{*}d_{p,p}^{1,0} = 
\sum_{f \in F_{n+1}, \xi(i(f)) = t(p)} b_{p\xi^{0}(f), p \xi^{0}(f)}.
\]

Let $v = \left( \begin{array}{cc} d_{p,p}^{1,0} & 0 \\
 1 - b & 0 \end{array} \right)$.
Then, for $m > n$,  using the facts that $e_{m}$ 
and $b$   are projections and commute, we have 
  \begin{eqnarray*}
  \alpha(v)_{m} & = &  \left[ \begin{array}{cc} ve_{m} & 0 \\
  \left( v^{*}v - e_{m}v^{*}ve_{m} \right)^{1/2} & 0 \end{array} \right]  \\
     &  =  &
   \left[ \begin{array}{cccc} d_{p,p}^{1,0} e_{m} & 0 & 0 & 0 \\
     ( 1 - b) e_{m} & 0 & 0 & 0 \\
  ( 1- e_{m} ) & 0 & 0 & 0  \\
  0 & 0 & 0 & 0 
   \end{array} \right] \\
    &  =  &
   \left[ \begin{array}{cccc} a_{p,p}^{1,0} + a' & 0 & 0 & 0 \\
     ( 1 - b) e_{m} & 0 & 0 & 0 \\
  ( 1- e_{m} ) & 0 & 0 & 0  \\
  0 & 0 & 0 & 0 
   \end{array} \right] 
  \end{eqnarray*}
  
  We will use part (iii) of Lemma 2.2 of \cite{Put2}, which states that 
  left multiplication by a unitary in
   $M_{4}( \left( C^{*}(H_{0}) \oplus C^{*}(H_{1})\right)^{\sim})$ 
  does not change the class of this element in the relative $K$-group.
  We then note that 
  \[
   \left[ \begin{array}{cccc} 1 & 0 & 0 & 0 \\
     0 & e_{m} & 1-e_{m} & 0 \\
  0 & 1-e_{m} & e_{m} & 0  \\
  0 & 0 & 0 & 1 
   \end{array} \right] 
    \left[ \begin{array}{cccc} a_{p,p}^{1,0} + a' & 0 & 0 & 0 \\
     ( 1 - b) e_{m} & 0 & 0 & 0 \\
  ( 1- e_{m} ) & 0 & 0 & 0  \\
  0 & 0 & 0 & 0 
   \end{array} \right] =
    \left[ \begin{array}{cccc} a_{p,p}^{1,0} + a' & 0 & 0 & 0 \\
      1 - b e_{m} & 0 & 0 & 0 \\
  0 & 0 & 0 & 0  \\
  0 & 0 & 0 & 0 
   \end{array} \right] 
  \]
  Then, we compute, noting that $c$ is in $  C^{*}(H_{0}) \oplus C^{*}(H_{1})$,
 \[
   \left[ \begin{array}{cccc} 1-cc^{*} & c & 0 & 0 \\
      c^{*} & 1 - c^{*}c & 0 & 0 \\
  0 & 0 & 1 & 0  \\
  0 & 0 & 0 & 1 
   \end{array} \right]
   \left[ \begin{array}{cccc} a_{p,p}^{1,0} + c & 0 & 0 & 0 \\
      1 - b e_{m} & 0 & 0 & 0 \\
  0 & 0 & 0 & 0  \\
  0 & 0 & 0 & 0 
   \end{array} \right] =
     \left[ \begin{array}{cccc} a_{p,p}^{1,0}  & 0 & 0 & 0 \\
      1 - a_{p,p}^{0,0} & 0 & 0 & 0 \\
  0 & 0 & 0 & 0  \\
  0 & 0 & 0 & 0 
   \end{array} \right]
  \]   
 \end{proof}
 
 \begin{prop}
 \label{K:30}
 In the six-term exact sequence of Theorem 2.4 of \cite{Put3}, the map 
 from $K_{0}(C^{*}(H))$ to $K_{0}(C^{*}(R_{E}))$ is zero.
 \end{prop}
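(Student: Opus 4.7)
The plan is as follows: I will identify the map $K_0(C^*(H)) \to K_0(C^*(R_E))$ from Theorem 2.4 of \cite{Put3} with a composition going through $K_1(C^*(R_E); C^*(R))$ and the standard boundary map $\partial$, evaluate $\partial$ on a generating set of classes (which has just been computed in Proposition \ref{K:20}), and then observe that the resulting $K_0$-class in $C^*(R_E)$ vanishes for purely combinatorial reasons coming from the fact that $\xi^0$ and $\xi^1$ land on the same vertices.

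By Theorem \ref{K:10}(2), the group $K_0(C^*(H))$ is generated by the classes $[a_{p,p}^{0,0}]_0$ with $(p,p) \in I_n^W$, $n \geq 1$, and Theorem \ref{K:10}(3) together with Proposition \ref{K:20} identifies each such generator with the class $[v]_r \in K_1(C^*(R_E); C^*(R))$ of the partial isometry
\[
v = \begin{pmatrix} d_{p,p}^{1,0} & 0 \\ 1 - b & 0 \end{pmatrix},
\]
where $b = (d_{p,p}^{1,0})^* d_{p,p}^{1,0} = \sum_{f} b_{p\xi^0(f), p\xi^0(f)}$. I expect the map from $K_0(C^*(H))$ to $K_0(C^*(R_E))$ in the six-term sequence to be $[v]_r \mapsto [vv^*]_0 - [v^*v]_0$, via the description of the boundary map recalled at the beginning of this section. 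So it will suffice to check that $[vv^*]_0 - [v^*v]_0 = 0$ in $K_0(C^*(R_E))$ for each such $v$.

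A direct matrix calculation, using that $d := d_{p,p}^{1,0}$ is a partial isometry with source projection $b$ (so $d(1-b) = 0 = (1-b)d^*$), gives
\[
v^*v = \begin{pmatrix} 1 & 0 \\ 0 & 0 \end{pmatrix}, \qquad vv^* = \begin{pmatrix} dd^* & 0 \\ 0 & 1-b \end{pmatrix},
\]
with $dd^* = \sum_{f} b_{p\xi^1(f), p\xi^1(f)}$ by Lemma \ref{groupoids:27}. Therefore, in $K_0(C^*(R_E))$,
\[
[vv^*]_0 - [v^*v]_0 = [dd^*]_0 - [b]_0 = \sum_{f \in F_{n+1},\ \xi(i(f)) = t(p)} \bigl([b_{p\xi^1(f),p\xi^1(f)}]_0 - [b_{p\xi^0(f),p\xi^0(f)}]_0\bigr).
\]
Since $t(p\xi^0(f)) = \xi(t(f)) = t(p\xi^1(f))$, the projections $b_{p\xi^0(f),p\xi^0(f)}$ and $b_{p\xi^1(f),p\xi^1(f)}$ are both minimal projections in the same matrix summand $B_{\xi(t(f))}$ of $B_{n+1}$, hence Murray--von Neumann equivalent via the matrix unit $b_{p\xi^1(f),p\xi^0(f)}$ and representing the same class in $K_0(C^*(R_E))$. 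Each summand therefore vanishes, yielding the proposition.

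The main point requiring care is to verify that the map in the six-term exact sequence of Theorem 2.4 of \cite{Put3} really is the composition of the isomorphisms from Theorem \ref{K:10} and Proposition \ref{K:20} with the standard boundary map $\partial$; once this compatibility is granted, the rest of the argument is the short calculation above.
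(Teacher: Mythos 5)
Your proposal is correct and follows essentially the same route as the paper: factor the map through $K_1(C^*(R_E);C^*(R))$ using Theorem \ref{K:10}(3) and Proposition \ref{K:20}, apply the boundary map $[v]_r \mapsto [vv^*]_0 - [v^*v]_0$, and observe that $d_{p,p}^{1,0}(d_{p,p}^{1,0})^*$ and $(d_{p,p}^{1,0})^*d_{p,p}^{1,0}$ (the characteristic functions of $U^1(p)$ and $U^0(p)$) are Murray--von Neumann equivalent in $C^*(R_E)$ via the partial isometry built from the matrix units $b_{p\xi^1(f),p\xi^0(f)}$. The paper packages that last step as a single partial isometry $w=\sum_{i(f)=w} b_{p\xi^1(f),p\xi^0(f)}$ rather than summand by summand, but the argument is the same.
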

 
 \begin{proof}
 The map is question is the composition of the isomorphism \newline
 $K_{0}(C^{*}(H)) \cong K_{0}(C^{*}(H_{0})) $, the 
 isomorphism \newline
  $ K_{0}(C^{*}(H_{0}) ) \cong K_{1}(C^{*}(H'); C^{*}(H)) $
 of Theorem \ref{K:10} and inverse of  the map $\alpha$  of 
 Proposition \ref{K:20}.
 We begin $[a^{0,0}_{p,p}]_{0}$, with  $p$ in $E_{0,n}$ with $t(p) = \xi(w)$, 
 $w\in W$.
By Theorem \ref{K:10}, the second isomorphism sends this to 
 $\left[ \begin{array}{cc} a^{1,0}_{p,p} & 0 \\
   1 - a^{0,0}_{p,p} & 0 \end{array} \right]_{r}$ 
   in $K_{1}(C^{*}(H'); C^{*}(H)) $. 
 By Proposition \ref{K:20}, the inverse of $\alpha$ carries this to
  $\left[ \begin{array}{cc} d^{1,0}_{p,p} & 0 \\
   1 - (d^{1,0}_{p,p})^{*}d^{1,0}_{p,p}  & 0 \end{array} \right]_{r}$ .
   Finally, the map from $K_{1}( C^{*}(R_{E}); C^{*}(R))$ 
 to $K_{0}(C^{*}(R_{E}))$ sends this to 
 \[
 \left[ \begin{array}{cc} d^{1,0}_{p,p}(d^{1,0}_{p,p})^{*} & 0 \\
  0 &  1 - (d^{1,0}_{p,p})^{*}d^{1,0}_{p,p}  \end{array} \right]_{0}
  - \left[ \begin{array}{cc} 1 & 0 \\
   0 & 0 \end{array} \right]_{0}
    = [ d^{1,0}_{p,p}(d^{1,0}_{p,p})^{*}]_{0} - 
    [ (d^{1,0}_{p,p})^{*}d^{1,0}_{p,p}]_{0}.
 \]
 As $d^{1,0}_{p,p}$ is the characteristic function of $\delta^{1,0}_{p,p}$, 
 $d_{p,p}^{1,0}(d_{p,p}^{1,0})^{*} $ is the characteristic function of the 
 source of $\delta^{1,0}_{p,p}$ while $(d_{p,p}^{1,0})^{*}d^{1,0}_{p,p}$ 
 is the characteristic function of the 
 range of $\delta^{1,0}_{p,p}$. 
 The former is the set $U^{1}(p)$, while the latter is $U^{0}(p)$. 
 We let   
 \[
 w = \cup_{i(f) = w} b_{p\xi^{1}(f), p\xi^{0}(f)}.
 \]
 This is clearly in $R_{E}$ and it is a simple matter to see that 
 \[
 ww^{*} = d_{p,p}^{1,0}(d_{p,p}^{1,0})^{*}, 
 w^{*}w = (d_{p,p}^{1,0})^{*}d_{p,p}^{1,0}.
 \]
 We conclude that the two projections 
 in $C^{*}(R_{E})$ are equivalent in $C^{*}(R_{E})$.
  This completes the proof.
 \end{proof}

We are now ready to prove the third and fourth
 parts of the main theorem,
\ref{intro:10} .

We consider the six-term exact sequence given in Theorem 2.4 of \cite{Put3}.

   \vspace{.3cm}

\xymatrix{ K_{0}(C^{*}(H))  \ar[r] & K_{0}(C^{*}(R_{E})) \ar[r] &
 K_{0}(C^{*}(R)) \ar[d] \\
 K_{1}(C^{*}(R)) \ar[u] & K_{1}(C^{*}(R_{E}))  \ar[l] & 
  K_{1}(C^{*}(H))   \ar[l] }
     \vspace{.3cm}

  We know that $C^{*}(H)$ and $C^{*}(R_{E})$ are both AF-algebras and their 
  $K$-one groups are trivial. In addition, we know that 
  $K_{0}(C^{*}(R_{E})) \cong G_{0}$ and $K_{0}(C^{*}(H)) \cong G_{1}$. 
  Putting in all of this information, we have 
     \vspace{.3cm}

  \xymatrix{ G_{1}  \ar[r] & G_{0} \ar[r] &
 K_{0}(C^{*}(R)) \ar[d] \\
 K_{1}(C^{*}(R)) \ar[u] & 0 \ar[l] & 
  0  \ar[l] }
     \vspace{.3cm}

  Finally, the last Proposition tells us that the horizontal map 
  from \newline 
  $K_{0}(C^{*}(H)) \cong G_{1}$ is zero. It follows that the two remaining 
  maps are isomorphisms.
  
  We must still show that the map on $K_{0}$ is an order isomorphism.
  As it arises from the inclusion of $C^{*}(R_{E}) $ in $C^{*}(R)$,
  it sends positive elements to positive elements. Conversely, suppose that
  an element in $K_{0}(C^{*}(R_{E}))$ has a positive image in 
  $K_{0}(C^{*}(R))$. Then the image under that element under every trace
  on $C^{*}(R)$ is strictly positive. On the other hand, these 
  traces all arise from $R$-invariant measures on $X_{E}$, which 
  are precisely the $R_{E}$-invariant measures by \ref{groupoids:60}. 
  As $C^{*}(R_{E})$ is a simple AF-algebra, it follows that the 
  element is positive in  $K_{0}(C^{*}(R_{E}))$.

  \begin{rem}
 \label{K:40}
 For readers who might be disappointed by  the fact that 
  our computation of  $K_{1}(C^{*}(R))$ did not 
   explicitly produce a single unitary in $C^{*}(R)$, 
   let us note the following. If we fix a path $p$ 
   with $t(p) $ in $\xi(W)$, and define
   \[
   v_{p} =  \left( \sum_{f \in F_{n+1}, \xi(i(f)) = t(p)}
   b_{p\xi^{0}(f), p \xi^{1}(f)} \right) d^{1,0}_{p,p},
   \]
   then $v_{p}$ is a partial isometry in $C^{*}(R)$
   and 
   \[
   v_{p}^{*}v_{p} = v_{p} v_{p}^{*} = \sum_{f \in F_{n+1}, \xi(i(f)) = t(p)}
   b_{p\xi^{0}(f), p \xi^{0}(f)} \leq b_{p,p},
   \]
    and $v_{p} + (1 - v_{p}^{*}v_{p})$ is a unitary.
    Under the natural map from $K_{1}(C^{*}(R))$ to 
    $K_{1}(C^{*}(R_{E}),C^{*}(R))$, it is sent to 
    $\left[ \begin{array}{cc} d_{p,p}^{1,0} & 0 \\ 1 - (d_{p,p}^{1,0})^{*}d_{p,p}^{1,0} & 0 \end{array} \right]_{r}$.
    
    Pursuing this slightly further,
     it is not difficult to show that
     \[
  C^{*}(B_{t(p)}, v_{p} + (b_{p,p} - v_{p}^{*}v_{p})) \cong C(\T) \otimes B_{t(p)},
  \]
  where $B_{t(p)}$ is as described earlier and $\T$ denotes the circle.
  Repeating this construction at every vertex of $\xi(W_{n})$, we 
  can construct a $C^{*}$-subalgebra of $C^{*}(R)$ which 
  we denote by $D_{n}$, containing $B_{n}$, and with 
  \[
  D_{n} \cong 
  \left( \oplus_{w \in W_{n}} C(\T) \otimes B_{\xi(w)} \right)
   \oplus  \left( \oplus_{v \in V_{n} - \xi(W_{n})}  B_{v} \right).
   \]
   This looks remarkably like an approximating subalgebra
   with the correct $K$-theory. 
   
   There is some difficulty however. Letting $f_{n+1}, f_{n+2}, \ldots$
   be any path in $(W,F)$ with $\xi(i(f_{n+1}))=v$, the function
   $v_{p}$ takes value one on the point
   \[
   ((p, \xi^{0}(f_{n+1}), \xi^{1}(f_{n+2}), 
   \xi^{1}(f_{n+3}), \ldots ), (p, 
   \xi^{0}(f_{n+1}), \xi^{0}(f_{n+2}), 
   \xi^{0}(f_{n+3}), \ldots )).
   \]
    In fact, all 
   the generators we have listed for all the subalgebras $D_{l}$ with $l > n$
   are zero on this point, so no linear
    combination of them can approximate $v_{p}$.
    (This is related to part 4 of Lemma \ref{groupoids:70}.)
     However, the supports of the generators for 
   $D_{l}$ are contained in a compact set, but not in a
    compact  subequivalence relation, so it is possible for some other 
   element of 
   $D_{l}$ to approximate $v_{p}$. It seems problematic to write such an 
   element
   explicitly without reference to some deep techniques, such as Berg's 
   technique.
   
   This means that it is not clear (at least to the author)
   that some $D_{l}, l > p$ will contain or 
   approximately contain $v_{p}$, nor is it clear that the union
   of the $D_{l}$ will be dense in $C^{*}(R)$.
  \end{rem}

  Let us finish this section with a pair of relatively 
  simple applications 
   of Theorem 2.4 from 
  \cite{Put3}. 
  
  \begin{rem}
  \label{K:50} We describe a minimal equivalence relation, $\tilde{R}$,
  on a Cantor set, $X$, with two distinct \'{e}tale topologies.
  It will contain a subequivalence relation $R$ and subset $L$ satisfying 
  the appropriate conditions from \cite{Put3}.  The two different topologies
  agree on both $R$ and $L$. In fact, $R$ is an AF-equivalence relation.
  This means that $H$ and $H'$ are exactly the same for the 
  two situations. The  place where the difference in the topologies 
  appears is in the map from $K_{0}(C^{*}(H))$ to $K_{0}(C^{*}(R))$. 
  (This is the same map we took some care to compute 
  in Proposition \ref{K:30}.) In fact, the two different topologies 
  produce different $C^{*}$-algebras.
  
  We consider a Cantor set $X$ with a minimal homeomorphism 
  $\varphi$ of $X$. Let $R_{\varphi}$ denote the orbit relation of $\varphi$
  with the natural topology described in section \ref{groupoids}.
  That is, each equivalence class is $\{ \varphi^{n}(x) \mid n \in \Z \}$, 
  for some $x$ in $X$.
   We select two points $y, z$ having distinct orbits and 
   let $R$ be the subequivalence relation with exactly the same 
   equivalence classes except $\{ \varphi^{n}(y) \mid n \leq 0 \},
   \{ \varphi^{n}(y) \mid n \geq 1 \}, \{ \varphi^{n}(z) \mid n \leq 0 \}$
  and  $\{ \varphi^{n}(z) \mid n \geq 1 \}$ are all 
  distinct equivalence classes \cite{Put1}.
  
  Also, let $R_{1}$ be the subequivalence relation with exactly the same 
   equivalence classes as $R_{\varphi}$ 
   except $ \{ \varphi^{n}(z) \mid n \leq 0 \}$
  and  $\{ \varphi^{n}(z) \mid n \geq 1 \}$ are  distinct equivalence classes.
  It follows from the results of \cite{Put1} that both $R$ and $R_{1}$
   are  AF-equivalence relations.
  
   Next, draw a Bratteli diagram for $R$, so we may identify $X$ with the path 
   space of this diagram. Put
    an order on the edges (as in \cite{HPS}) such that $y$ is the unique 
    infinite maximal path, while $\varphi(y)$ is the unique 
    infinite minimal path. Let $\psi$ be the associated Bratteli-Vershik map
    and $R_{\psi}$ be  its orbit relation with the natural topology.
    
    It is an easy matter to see that $R_{\psi}$ and $R_{1}$ are actually the 
    same equivalence relation, which we called $\tilde{R}$ above.
     They carry different topologies, however. 
    The latter is AF, while the former is not. Both contain 
    $R$ as an open subequivalence relation and 
    the two topologies agree on $R$.
     In each case, we may apply the 
    results of \cite{Put3} using 
    \[
    L = \{ (\varphi^{n}(y), \varphi^{m}(y) ) \mid n \leq 0, m \geq 1\}.
    \]
    The set $L$ is discrete in both topologies and in both cases
    the groupoids $H_{0}, H_{1}$ and $H$ are the same. It is a simple 
    matter to check that
    $C^{*}(H) \cong \mathcal{K}$, the $C^{*}$-algebra of compact operators,
    and $K_{0}(C^{*}(H)) \cong \Z$. If we consider the 
    six-term exact sequences in Theorem 2.4 of \cite{Put3}, one involving
    $C^{*}(R), C^{*}(R_{1})$ and $C^{*}(H)$ and the 
    other  involving
    $C^{*}(R), C^{*}(R_{\psi})$ and $C^{*}(H)$, four of the
     terms are identical:
     \vspace{.3cm}

\xymatrix{\Z \ar[r] &  K_{0}(C^{*}(R)) \ar[r] & K_{0}(C^{*}(R_{1})) \ar[d] 
  \\ 
 K_{1}(C^{*}(R_{1})) \ar[u] &  0 \ar[l] &  0 \ar[l] 
 \\
 \Z \ar[r] &   K_{0}(C^{*}(R)) \ar[r] & K_{0}(C^{*}(R_{\psi})) \ar[d]  \\
  K_{1}(C^{*}(R_{\psi})) \ar[u]  & 0 \ar[l] &  0 \ar[l]}
\vspace{.3cm}

    The difference arises in the maps between them. In the first case, 
    \newline
    $K_{1}(C^{*}(R_{1}))=0$ since it is an AF-algebra 
    and the $\Z$ in the upper left \newline
    corner maps injectively under the  
    map to its right. In the second case,
     $K_{1}(C^{*}(R_{\psi})) \cong \Z$ and the vertical map into 
     $\Z$ is an isomorphism.
     \end{rem}

\section{Tracially AF}
\label{TAF}

The aim of this section is to prove that the $C^{*}$-algebra
$C^{*}(R)$ is tracially AF. 

Recall that we begin with  $\mathcal{F}$ in $C^{*}(R)$, $\epsilon > 0$ 
and non-zero projection $p_{0}$ in $C^{*}(R)$. We must show that
there 
is a finite-dimensional $C^{*}$-subalgebra
$F \subseteq  A$ with unit
$p$  satisfying
\begin{enumerate}
\item 
$\Vert pa - ap \Vert < \epsilon$, for all  $a$ in $\mathcal{F}$, 
\item 
$ pap \in_{\epsilon} F$, for all $x$ in $\mathcal{F}$, 
\item 
$1-p$ is unitarily equivalent to a subprojection of 
$p_{0}$.
\end{enumerate}

We know that $C^{*}(R)$ is the closed linear span 
of the elements $b_{p,q}$, with $(p,q)$ in 
$I_{n}$,
 along with $d_{p,q}^{i,1-i}$, over $(p,q)$ in 
$I_{n}^{W}$ and  $i = 0,1$.

 Given the presence of the $\epsilon$ in the definition, it 
 suffices for us to consider $\mathcal{F}$ to be a finite subset 
 of such elements and then we may as well assume that 
 $\mathcal{F}$ consists of all such $b_{p,q}, (p,q) \in I_{n}$ 
 and $d_{p,q}^{i,1-i}, (p,q) \in I_{n}^{W}$, for  
 some finite range $1 \leq n \leq n_{0}$. We may also assume that 
 $n_{0}$ is chosen so that the span of these elements contains 
 a positive contraction  $a$ with $\Vert a - p \Vert < \epsilon$.

 For $l > n_{0}$ and $i=0, 1$, let $Q^{i}_{l}$ denote the set of all 
 paths $q$ in $E_{0,l}$ such that $(q_{n_{0}+1}, \ldots q_{l})$ is 
 in $\xi^{i}(F_{n_{0},l})$. 
Define $Y^{i}_{l}$ to be the set of all $x$ in 
$X_{E}$ such that  $(x_{n_{0}+1}, \ldots, x_{l})$ is in $ Q^{i}_{l}$.

We also define
 \[
 \bar{e}^{i}_{l} = \sum_{q \in Q^{i}_{l}} b_{p,p},  \hspace{1cm}
 \bar{e}_{l} = \bar{e}^{0}_{l} + \bar{e}^{1}_{l},
 \]
 which is a projection in $C^{*}(R_{E})$. We 
 will use $p = 1 - \bar{e}_{l}$, for suitably chosen $l > n_{0}$. 
 (We use $\bar{e}_{l}$ so that it is not confused with 
   the $e_{m}$ of 
 the last section.)
 
 Our first result shows that the projections
  $\bar{e}_{l}$ tend to zero, in two different senses.

\begin{lemma}
\label{TAF:5}
\begin{enumerate}
\item
For any trace $\tau$ on $C^{*}(R_{E})$, we have 
\[
\tau(\bar{e}_{l}) \leq 2^{1+n_{0}-l}.
\]
\item 
There is a faithful, unital representation $\pi$ of $C^{*}(R)$
such that $\pi(\bar{e}_{l}), l =1, 2, \ldots,$ 
converges to zero in the strong operator topology.
\end{enumerate}
\end{lemma}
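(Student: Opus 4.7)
The plan is to handle the two parts with different tools. Part (1) is a measure-theoretic estimate, using the standard bijection between tracial states on the simple AF-algebra $C^{*}(R_{E})$ and $R_{E}$-invariant probability measures on $X_{E}$. Part (2) will follow by taking the GNS representation of a faithful tracial state on $C^{*}(R)$: faithfulness is automatic from simplicity of $C^{*}(R)$, while the SOT convergence is provided by part (1).

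For part (1), I would first translate: any trace $\tau$ on $C^{*}(R_{E})$ corresponds to an $R_{E}$-invariant probability measure $\mu$ with $\tau(b_{q,q}) = \mu(U(q))$, hence
\[
\tau(\bar{e}_{l}^{i}) = \sum_{q \in Q_{l}^{i}} \mu(U(q)) = \mu(Y_{l}^{i}).
\]
The main step is the one-step bound
\[
\mu\bigl(\{x \in U(p) : x_{k} \in \xi^{i}(F_{k})\}\bigr) \le \tfrac{1}{2}\, \mu(U(p)),
\]
valid for any $k \ge 1$ and any $p \in E_{0,k-1}$ with $t(p) \in \xi(W_{k-1})$. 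This is exactly the inequality underlying the proof of Lemma \ref{groupoids:25}: written in terms of the values of $\mu$ at level $k$ and the edge-count functions (total versus those lying in $\xi^{i}(F_{k})$), it reduces to the hypothesis that between any two vertices at consecutive levels of $(V, E)$ there are more than $2 \# F_{k}$ edges, while at most $\# F_{k}$ of them lie in $\xi^{i}(F_{k})$. Iterating this bound through levels $n_{0}+1, \ldots, l$ and summing over the disjoint cylinders $U(p)$ with $p \in E_{0,n_{0}}$ and $t(p) \in \xi(W_{n_{0}})$ gives $\mu(Y_{l}^{i}) \le 2^{n_{0}-l}$; summing over $i = 0,1$ yields the claimed bound.

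For part (2), I would pick any $R_{E}$-invariant probability measure $\mu_{0}$ on $X_{E}$ (which exists since $C^{*}(R_{E})$ is a unital AF-algebra), note by Proposition \ref{groupoids:60} that it is also $R$-invariant, and form the associated tracial state $\tau(f) = \int_{X_{E}} f|_{X_{E}} \, d\mu_{0}$ on $C^{*}(R)$. Since $C^{*}(R)$ is simple (by Theorems \ref{groupoids:55} and \ref{groupoids:80}), this tracial state is automatically faithful, and hence its GNS representation $\pi$ is a faithful, unital $*$-representation. Since $Y_{l+1}^{i} \subseteq Y_{l}^{i}$, the sequence $\bar{e}_{l}$ is decreasing, so $\pi(\bar{e}_{l})$ decreases SOT to some projection $e$. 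But for each $a \in C^{*}(R)$, cyclicity of $\tau$ gives
\[
\|\pi(\bar{e}_{l}) \pi(a) \hat{1}\|^{2} = \tau(\bar{e}_{l}\, a a^{*}\, \bar{e}_{l}) \le \|a\|^{2}\, \tau(\bar{e}_{l}) \longrightarrow 0
\]
by part (1), forcing $e$ to vanish on the dense subspace $\pi(C^{*}(R))\hat{1}$, hence $e = 0$. The main obstacle I foresee is the measure-iteration bookkeeping in part (1); the GNS argument in part (2) is routine.
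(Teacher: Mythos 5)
Your proof is correct. Part (1) is essentially the paper's own argument: the same per-level factor of $1/2$ (between any two fixed consecutive vertices, at most half of the edges lie in $\xi^{i}(F)$, since $\xi^{0}(F)$ and $\xi^{1}(F)$ contribute equally and are disjoint), iterated through levels $n_{0}+1,\ldots,l$ and summed over $i=0,1$. Part (2), however, takes a genuinely different route. The paper does not use part (1) there at all: it picks a specific point $x$ whose edges alternate between $\xi^{1}(F)$ and $\xi^{0}(F)$, observes that $[x]_{R}=[x]_{R_{E}}$, and works in the left regular representation $\lambda_{x}$ on $\ell^{2}[x]_{R_{E}}$ (faithful by simplicity); each basis vector $\delta_{y}$ is tail-equivalent to $x$, so for $l$ large the window $(y_{n_{0}+1},\ldots,y_{l})$ contains edges from both $\xi^{0}(F)$ and $\xi^{1}(F)$ and hence $\lambda_{x}(\bar{e}_{l})\delta_{y}=0$ exactly. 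Your GNS argument instead leverages part (1): an $R_{E}$-invariant, hence by Proposition \ref{groupoids:60} $R$-invariant, probability measure yields a tracial state on $C^{*}(R)$ whose restriction to $C^{*}(R_{E})$ is a trace there, faithfulness follows from simplicity, and the estimate $\Vert \pi(\bar{e}_{l})\pi(a)\hat{1}\Vert^{2}=\tau(a^{*}\bar{e}_{l}a)\leq \Vert a \Vert^{2}\tau(\bar{e}_{l})\rightarrow 0$ on the dense subspace $\pi(C^{*}(R))\hat{1}$, together with uniform boundedness, gives strong convergence to zero. Both routes are valid and both invoke simplicity for faithfulness; the paper's version is more concrete (eventual exact vanishing on an explicit orthonormal basis, with no reliance on the trace estimate), while yours is shorter and shows that the GNS representation of any trace does the job, at the cost of routing part (2) through part (1) and the existence and invariance facts for measures.
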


\begin{proof}
We know that $\tau(\bar{e}_{l}) = \tau(\bar{e}_{l}^{0}) + \tau(\bar{e}_{l}^{1}) = \mu(Y_{l}^{0}) + \mu(Y_{l}^{1})$, 
for some $R_{E}$-invariant measure on $X_{E}$.
 
 Fix a path $q$ in $E_{0,n_{0}}$ and $i=0, 1$.
Let $v$ be a  vertex in $V_{l}$ and let $Q'_{v}$ be the set of all 
paths $q'$ in $E_{n_{0}+1, l}$ with $t(q') = v$. At each individual edge set,
 at most one half of the edges  between two fixed vertices are in 
 $\xi^{0}(F)$ (since there are an equal number in 
  $\xi^{1}(F)$ and they are disjoint). It follows that 
  the number of paths in $Q'_{v}$ which lie in $\xi^{i}(F)$ are at most 
  $2^{n_{0}-l}$ of the total number in $Q'_{v}$. For all of these paths $q'$, $U(qq')$
  all have the same measure. If we apply the measure $\mu$ and sum over the vertices, we get
 \[
\mu(U(q) \cap Y_{l}^{i}) \leq 2^{n_{0}-l} \mu(U(q)).
\] 
Now summing over all paths $q$ and $i=0,1$ yields the first result.

For the second part, let $(f_{1}, f_{2}, \ldots)$ be any path in $X_{F}$ and let 
\[
x = (\xi^{1}(f_{1}), \xi^{0}(f_{2}), \xi^{1}(f_{3}), \xi^{0}(f_{2}), \ldots)
\]
be in $X_{E}$. Consider the left regular representation, $\lambda_{x}$,
 of $C^{*}(R)$
on $\ell^{2}[x]_{R} = \ell^{2}[x]_{R_{E}}$ (\cite{Ren}). 
We know this is faithful because $C^{*}(R)$ is simple.
 Consider the standard basis 
for $\ell^{2}[x]_{R_{E}}$: a typical element is supported on a single
point $y$ with $(x, y)$ in $R_{E}$. We denote this function by $\delta_{y}$.
This means that there is some $i_{0}$ 
such that
$y_{i} = x_{i}$, for all 
$i \geq i_{0}$. If $l > i_{0}+2$, then $\lambda_{x}(\bar{e}_{l}) \delta_{y} =0$.
The desired conclusion follows.
\end{proof}

 Let us make some easy observations. First, it follows easily from
 Lemma \ref{groupoids:10} that we have 
 \[
 \bar{e}_{l} b_{p,p'} = b_{p,p'} \bar{e}_{l} 
 \]
 for all $(p,p')$ in $I_{n}$ with $1 \leq n < l$
  and that the product lies in 
 finite-dimensional $C^{*}$-algebra
$B_{l}$ as described following Definition 
\ref{groupoids:20}.
Next, we consider similar facts for
products with the $d^{i,1-i}_{p,p'}$.

 \begin{lemma}
 \label{TAF:10}
 For all $n < l$, $(p,p')$ in $I_{n}$ and $i = 0,1$, we have 
 \[
 \bar{e}_{l} d^{i, 1-i}_{p,p'} =  d^{i, 1-i}_{p,p'}  \bar{e}_{l}
 \] 
 and 
 \[
 ( 1 - \bar{e}_{l})  d^{i, 1-i}_{p,p'} \in B_{l}.
 \]
 \end{lemma}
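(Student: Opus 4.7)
Both claims will follow from a single geometric symmetry: for every pair $(x,y) \in \delta^{i,1-i}_{p,p'}$, we expect $x \in Y_l \iff y \in Y_l$. Granting this, the commutation is immediate, because $\bar{e}_l$ is the characteristic function of the diagonal $\{(z,z) : z \in Y_l\}$, so that $(\bar{e}_l \cdot d^{i,1-i}_{p,p'})(x,y) = d^{i,1-i}_{p,p'}(x,y)\chi_{Y_l}(x)$ and $(d^{i,1-i}_{p,p'} \cdot \bar{e}_l)(x,y) = d^{i,1-i}_{p,p'}(x,y)\chi_{Y_l}(y)$, and these two expressions coincide on the support of $d^{i,1-i}_{p,p'}$ by the symmetry.

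The symmetry itself I would establish by casing on the three constituent families of pairs in $\delta^{i,1-i}_{p,p'}$ given by its defining decomposition: the $\lambda$-strand of purely alternating sequences, the type-2 pairs in which $x$ and $y$ disagree at a single swapped $\xi^{1-i}/\xi^i$-edge at position $k+1$ and then coincide, and the type-3 pairs in which they pass through a common non-$\xi$ edge at position $k+1$ and then coincide. For the $\lambda$-strand, $x \in Y_l^i$ and $y \in Y_l^{1-i}$. For the type-2 and type-3 pieces, if $k \geq l$ the situation reduces to the $\lambda$-case; if the divergence position $k+1$ lies in the window $\{n_0+1,\dots,l\}$, then each of $x$ and $y$ exhibits edges from both $\xi^i(F)$ and $\xi^{1-i}(F) \cup (E \setminus \xi(F))$ inside that window, so both lie outside $Y_l$; and if $k+1 < n_0+1$, then $x$ and $y$ coincide on the entire window and share their $Y_l$-status.

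For the residual, the same case analysis identifies $(1-\bar{e}_l) d^{i,1-i}_{p,p'}$ as the characteristic function of exactly the type-2 and type-3 sub-pieces of $\delta^{i,1-i}_{p,p'}$ whose divergence position $k+1$ lies in $\{n_0+1,\dots,l\}$. Each such sub-piece has the form $\beta_{u,v}$ with $(u,v) \in I_{k+1}$, where $u$ and $v$ are explicit extensions of $p$ and $p'$ by intermediate $\xi$-labeled edges followed by the terminating edge at position $k+1$ (common to $u$ and $v$ in the type-3 case, swapped in the type-2 case). Since $k+1 \leq l$, the corresponding $b_{u,v}$ lies in $B_{k+1} \subseteq B_l$, and the full residual is a finite sum of such matrix units in $B_l$.

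The main obstacle will be the middle case of the symmetry: one must verify that whenever the divergence position $k+1$ lies inside the window, both $x$ and $y$ are simultaneously forced out of each of $Y_l^0$ and $Y_l^1$. Because the labels on the two sides of a type-2 pair are interchanged, the four checks (blocking $Y_l^0$ and $Y_l^1$ for each of $x$ and $y$) use different $\xi$-structures, and particular care is needed near the endpoints of the window $\{n_0+1,\dots,l\}$ and when $n$ is strictly smaller than $n_0$. Once that case is secured, the remaining steps are combinatorial bookkeeping from the explicit form of each type-2 and type-3 piece.
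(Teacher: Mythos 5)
Your overall strategy --- reducing the commutation to the pointwise symmetry $x \in Y_{l} \Leftrightarrow y \in Y_{l}$ for $(x,y)$ in $\delta^{i,1-i}_{p,p'}$, and identifying $(1-\bar{e}_{l})d^{i,1-i}_{p,p'}$ with the type-2 and type-3 pieces that diverge inside the window --- is the correct reading of what the lemma asserts, and it is essentially an unrolled version of the paper's own proof, which runs an induction on $l-n$ through the decomposition of Lemma \ref{groupoids:40}. The problem is that the step you yourself flag as the main obstacle is not merely delicate: the implication you state for the middle case is false, and with it the symmetry. Take $i=1$, $n = n_{0}-1$, $l \geq n_{0}+2$, and a type-2 pair whose divergence sits at the left endpoint of the window, $k+1 = n_{0}+1$ (so $k = n+1$, which is permitted): $x = (p, \xi^{1}(f_{n_{0}}), \xi^{0}(f_{n_{0}+1}), \xi^{0}(g_{n_{0}+2}), \ldots)$ and $y = (p', \xi^{0}(f_{n_{0}}), \xi^{1}(f_{n_{0}+1}), \xi^{0}(g_{n_{0}+2}), \ldots)$ with the common tail chosen inside $\xi^{0}(F)$. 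Then the window $(x_{n_{0}+1}, \ldots, x_{l})$ lies entirely in $\xi^{0}(F)$, so $x \in Y^{0}_{l}$, while the window of $y$ starts with a $\xi^{1}$-edge and then returns to $\xi^{0}(F)$, so $y \notin Y^{0}_{l} \cup Y^{1}_{l}$; hence $\bar{e}_{l} d^{1,0}_{p,p'}$ and $d^{1,0}_{p,p'} \bar{e}_{l}$ differ at $(x,y)$. Your blanket assertion that a divergence inside the window forces both points out of $Y_{l}$ relies on a pre-divergence $\xi^{i}$-edge also lying in the window, and when the divergence occurs at position $n_{0}+1$ there is no such edge. (For $n > n_{0}$, also covered by the statement, there is a separate failure: the prefix $p$ itself intrudes into the window, and already the $\lambda$-strand breaks the symmetry.)

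Consequently the gap cannot be closed by more careful bookkeeping: for $n < n_{0}$ the commutation claim is genuinely false, and your argument only closes when every divergence position is at least $n_{0}+2$, which forces $n \geq n_{0}$. You have in fact put your finger on a soft spot in the lemma itself --- the paper's inductive step asserts at the corresponding moment that $p\xi^{1}(f)\xi^{0}(f')$ lies in neither $Q^{0}_{n+2}$ nor $Q^{1}_{n+2}$, and this fails for the same reason when $n+2 = n_{0}+1$ --- but as a proof of the statement as written, your proposal is incomplete at exactly the step you deferred, and no completion exists without first restricting the range of $n$ (the argument is sound for $n = n_{0}$) or modifying the definition of $\bar{e}_{l}$.
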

 
 \begin{proof}
 The proof will be by induction on $l-n$. In the case, 
 $l-n=1$ or $l = n+1$, we will actually prove that
 $\bar{e}_{l} d^{i,1-i}_{p,p'}  = d^{i,1-i}_{p,p'}$. For 
 simplicity, we will assume that $i=1$.
 
 It is a consequence of  Lemma \ref{groupoids:40} that we may write
\begin{eqnarray*}
d^{1,0}_{p,p'} & =  & \sum_{f \in A} d^{1,0}_{p\xi^{1}(f), p'\xi^{0}(f)} \\
 &  & + \sum_{(f,f') \in B} b_{p\xi^{1}(f)\xi^{0}(f'), q\xi^{0}(f)\xi^{1}(f')} 
 \\
   &  &   + \sum_{(f,e) \in C} b_{p\xi^{1}(f)e, q\xi^{0}(f)\xi^{1}e}
   \end{eqnarray*}
 For fixed $q$ in $Q^{i}_{l}$, we have seen earlier in the proof 
 of Theorem \ref{groupoids:50} that 
 $b_{q,q} d^{1,0}_{p\xi^{1}(f), p'\xi^{0}(f)} $ is equal to 
 $d^{1,0}_{p\xi^{1}(f), p'\xi^{0}(f)}$ if $q =p\xi^{1}(f)$ and is zero
 otherwise. As $p \xi^{1}(f)$ is in $Q^{1}_{l}$ and not in $Q^{0}_{l}$, 
 summing over all $q$ and $i$ gives 
 \[
 \bar{e}_{l} d^{1,0}_{p\xi^{1}(f), p'\xi^{0}(f)} =  d^{1,0}_{p\xi^{1}(f), p'\xi^{0}(f)}.
 \]
 A similar argument, using the results of 
 Lemma \ref{groupoids:10}, shows the same conclusion for the second and third sum.
 By taking adjoints, we have the same conclusion for right multiplication by
 $\bar{e}_{l}$. That is, we have shown that 
 $ ( 1 - \bar{e}_{l})  d^{i, 1-i}_{p,p'} =  d^{i, 1-i}_{p,p'}  ( 1 - \bar{e}_{l}) =0$.
 
 Now, we assume that $l > n+1$. Again, we use the same decomposition of $d^{i,1-i}_{p,p'}$
 as above. In the first sum, we use the obvious fact that $(p\xi^{1}(f), p'\xi^{0}(f))$
 is in $I_{n+1}^{W}$ so that we may apply the induction hypothesis. 
 The conclusion follows at once for the first sum. 
 
 For the second sum, 
 the path $p\xi^{1}(f)\xi^{0}(f')$ is in neither $Q^{1}_{n+2}$ nor $Q^{0}_{n+2}$ 
 and it follows that
  $b_{q,q}  b_{p\xi^{1}(f)\xi^{0}(f'), q\xi^{0}(f)\xi^{1}(f')} =0$, for 
 all $q$ in $Q^{j}_{l}, j=0,1$. Similarly, since $e$ is in neither $\xi^{0}(F_{n+2})$
 nor $\xi^{1}(F_{n+2})$, the path 
 $p\xi^{1}(f)e$ is in neither $Q^{1}_{n+2}$ nor $Q^{0}_{n+2}$ 
 and it follows that $b_{q,q}  b_{p\xi^{1}(f)e, q\xi^{0}(f)e} =0$, for 
 all $q$ in $Q^{j}_{l}, j=0,1$. We have shown that left multiplication 
 by $\bar{e}_{l}$ on the second and third sum yields zero. Hence, multiplication
 by $1 - \bar{e}_{l}$ leaves these unchanged and they are already 
 in $B_{n+2} \subseteq B_{l}$.
\end{proof}

Now we can verify the three properties to show that $C^{*}(R)$ is tracially $AF$.
Our finite-dimensional $C^{*}$-subalgebra is $(1-\bar{e}_{l}) B_{l}(1 - \bar{e}_{l})$. 
We have already shown the first of the three properties, namely that 
$\Vert a (1 - \bar{e}_{l}) - (1-\bar{e}_{l}) a \Vert = 0 < \epsilon  $, for every $a$ in $\mathcal{F}$.

For the second property, we know that 
$(1 - \bar{e}_{l}) b_{p,q} (1 - \bar{e}_{l})$ lies in 
$(1 - \bar{e}_{l}) B_{n} (1 - \bar{e}_{l}) \subseteq (1 - \bar{e}_{l}) B_{l} (1 - \bar{e}_{l})$ 
For $(p,q)$ in $I_{n}$ with $n < l$. On the other hand, the fact that 
$(1 - \bar{e}_{l}) d^{i,1-i}_{p,q} (1 - \bar{e}_{l})$ lies in $(1 - \bar{e}_{l}) B_{l} (1 - \bar{e}_{l})$,
for $(p,q)$ in $I_{n}^{W}$, $n < l$, follows immediately from 
Lemma \ref{TAF:10}.

We finally turn to the proof that $l > n_{0}$ may be 
chosen so that $\bar{e}_{l}$ is unitarily equivalent to a subprojection
of $p_{0}$. We will make use of the following general facts which are 
familiar \cite{Bla}.
\begin{enumerate}
\item  If $a$ is a self-adjoint element of a unital  $C^{*}$-algebra
and $\Vert a^{2} -a \Vert < \epsilon < .25$ then there is a projection
$p$ with $\Vert a - p \Vert < \epsilon$. Or in words, 
almost being  a projection implies being close to a projection.
\item If $p$ and $q$ are two projections in a unital $C^{*}$-algebra 
with $\Vert p -q \Vert < 1$, then $p$ and $q$ 
are unitarily equivalent.
\end{enumerate}
Also recall that we have $a$ in the span of $\mathcal{F}$, and therefore
commutes with $\bar{e}_{l}$, and has $\Vert a - p  \Vert < \epsilon$. 
Moreover, $(1 - \bar{e}_{l}) a (1 - \bar{e}_{l}) $
lies in  the finite dimensional 
algebra $(1 - \bar{e}_{l}) B_{l} (1 - \bar{e}_{l}) $.

We will argue  heuristically, rather than making
 precise estimates which are  routine, although this does make some 
 assumptions on the size of 
 $\epsilon$. First, we claim that
 $(1-\bar{e}_{l}) p_{0}(1-\bar{e}_{l})$ is almost a projection which lies in 
 $C^{*}(R_{E})$.
 This follows from the fact that it is close to 
 $(1-\bar{e}_{l}) a(1-\bar{e}_{l}) $, which is almost a projection and lies in 
 $C^{*}(R_{E})$. It follows that it is close to a projection, 
 which we call $p_{1}$, 
 in $C^{*}(R_{E})$.
 
 Secondly, we claim that if $l$ is sufficiently large, then
 this projection is 
 not zero. Let $\pi$ be the representation of $C^{*}(R)$ 
 in Lemma \ref{TAF:5}. Since $p_{0}$ is assumed to be non-zero,
 it follows that 
  $\pi((1-\bar{e}_{l}) p_{0})$ is converging strongly to 
  $\pi(p_{0})$ which is non-zero since $\pi$ is faithful.
  It follows that, if $l$ is sufficiently large, 
$  (1-\bar{e}_{l}) p_{0} (1-\bar{e}_{l})
   =( (1-\bar{e}_{l}) p_{0}) ( (1-\bar{e}_{l}) p_{0})^{*}  $
   is not zero.
   
 Thirdly, we claim that 
 $(1-\bar{e}_{l}) p_{0}(1-\bar{e}_{l})$ is close to 
 $p_{0}(1-\bar{e}_{l}) p_{0}$. 
 This follows from the facts that $p_{0}$ is 
 close to $a$ which commutes with $\bar{e}_{l}$. 
 Finally,  $p_{0}(1-\bar{e}_{l}) p_{0}$ is almost a projection
 by similar arguments and is therefore close to a projection, 
 which we call $p_{2}$ in $p_{0}C^{*}(R)p_{0}$. Also note 
 that $p_{2}$ is close to  $p_{0}(1-\bar{e}_{l}) p_{0}$, 
 which is close to  $(1-\bar{e}_{l}) p_{0}(1-\bar{e}_{l})$, 
 which is close to $p_{1}$. 
 Hence $p_{2}$ and $p_{1}$ are unitarily equivalent.

 In summary, we have shown that $p_{0} \geq p_{2} \sim_{u} p_{1} \neq 0$
 and $p_{1}$ is in $C^{*}(R_{E})$. In a simple AF-algebra, positivity in 
 $K$-theory is determined entirely by traces \cite{Eff}. 
 As $\min_{\tau}\{ \tau(p_{1}) \} > 0$, where the minimum is taken over 
 all traces $\tau$ on $C^{*}(R_{E})$, we can find 
 $l$ sufficiently large such that 
 \[
 \tau(\bar{e}_{l}) \leq  2^{1+n_{0}-l} < \tau(p_{1}), 
 \]
 for every $\tau$. It follows that $ \bar{e}_{l}$ is unitarily 
 equivalent to a subprojection of $p_{1}$, and hence of $p_{0}$, as well.
 This completes the proof.

\bibliographystyle{amsplain}

\end{document}